\newcommand{\driverOption}{}
  \renewcommand{\driverOption}{pdftex}
  \renewcommand{\driverOption}{dvips}
\newcommand{\hyperrefDriverOption}{}
	\renewcommand{\hyperrefDriverOption}{pdftex}
	\renewcommand{\hyperrefDriverOption}{hypertex}
	\newcommand{\TM}[1]{\marginpar{\parbox{4cm}{{\small {\bf TM:} #1}}}}
	\newcommand{\VW}[1]{\marginpar{\parbox{4cm}{{\small {\bf FW:} #1}}}}
	\newcommand{\TM}[1]{}
	\newcommand{\VW}[1]{}
\newtheorem{theorem}{Theorem}
\newtheorem{lemma}[theorem]{Lemma}
\theoremstyle{definition}
\theoremstyle{remark}
\long\def\symbolfootnote[#1]#2{\begingroup
\def\thefootnote{\fnsymbol{footnote}}\footnote[#1]{#2}\endgroup}
\begin{document}

\begin{center}

\LARGE A minimum-change version of the \\ Chung-Feller theorem for Dyck paths
\vspace{2mm}

\Large Torsten M\"utze, Christoph Standke, Veit Wiechert
\vspace{2mm}

\large
  Institut f\"ur Mathematik \\
  TU Berlin, 10623 Berlin, Germany \\
  {\small\tt \{muetze,standke,wiechert\}@math.tu-berlin.de}
\vspace{5mm}

\small

\begin{minipage}{0.8\linewidth}
\textsc{Abstract.}
A Dyck path with $2k$ steps and $e$ flaws is a path in the integer lattice that starts at the origin and consists of $k$ many $\upstep$-steps and $k$ many $\downstep$-steps that change the current coordinate by $(1,1)$ or $(1,-1)$, respectively, and that has exactly $e$ many $\downstep$-steps below the line $y=0$. Denoting by $D_{2k}^e$ the set of Dyck paths with $2k$ steps and $e$ flaws, the Chung-Feller theorem asserts that the sets $D_{2k}^0,D_{2k}^1,\ldots,D_{2k}^k$ all have the same cardinality $\frac{1}{k+1}\binom{2k}{k}=C_k$, the $k$-th Catalan number.
The standard combinatorial proof of this classical result establishes a bijection $f'$ between $D_{2k}^e$ and $D_{2k}^{e+1}$ that swaps certain parts of the given Dyck path $x$, with the effect that $x$ and $f'(x)$ may differ in many positions. In this paper we strengthen the Chung-Feller theorem by presenting a simple bijection $f$ between $D_{2k}^e$ and $D_{2k}^{e+1}$ which has the additional feature that $x$ and $f(x)$ differ in only \emph{two} positions (the least possible number). We also present an algorithm that allows to compute a sequence of applications of $f$ in constant time per generated Dyck path.
As an application, we use our minimum-change bijection $f$ to construct cycle-factors in the odd graph $O_{2k+1}$ and the middle levels graph $M_{2k+1}$ --- two intensively studied families of vertex-transitive graphs --- that consist of $C_k$ many cycles of the same length.
\end{minipage}

\end{center}

\vspace{5mm}


\section{Introduction}

The Catalan numbers are one of the most fundamental counting sequences in combinatorics, and Dyck paths are among the most heavily studied Catalan families (see \cite{stanley-cat:15} and references therein).
A \emph{Dyck path with $2k$ steps and $e$ flaws} ($e\in\{0,1,\ldots,k\}$) is a path in the integer lattice $\mathbb{Z}^2$ that starts at the origin $(0,0)$ and consists of $k$ many $\upstep$-steps and $k$ many $\downstep$-steps that change the current coordinate by $(1,1)$ or $(1,-1)$, respectively (so the path ends at the coordinate $(2k,0)$), and that has exactly $e$ many $\downstep$-steps below the line $y=0$. In particular, a Dyck path with zero flaws never moves below the line $y=0$, and a Dyck path with $k$ flaws never moves above the line $y=0$.
We denote the set of all Dyck paths with $2k$ steps and $e$ flaws by $D_{2k}^e$. Clearly, the total number of Dyck paths with $2k$ steps and an arbitrary number of flaws is $\binom{2k}{k}$. The classical Chung-Feller theorem, first proved in \cite[p.~168]{MacMahon:09}, asserts that partitioning the set of all these Dyck paths according to the number of flaws yields $k+1$ subsets of the same size $\frac{1}{k+1}\binom{2k}{k}=:C_k$, where $C_k$ is the $k$-th Catalan number.

\begin{theorem}[\cite{MacMahon:09}, \cite{chung-feller:49}]
\label{thm:chung-feller}
For any $k\geq 1$ we have $|D_{2k}^0|=|D_{2k}^1|=\cdots=|D_{2k}^k|=\frac{1}{k+1}\binom{2k}{k}=C_k$.
\end{theorem}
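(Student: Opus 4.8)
The plan is to prove the theorem by a first-return (arch) decomposition of the lattice paths, packaging the flaw counts into a single polynomial and then pinning down that polynomial by induction. For a free path $x$ (with $k$ many $\upstep$-steps and $k$ many $\downstep$-steps from $(0,0)$ to $(2k,0)$) I would write $x=B_1B_2\cdots B_m$ as the concatenation of its \emph{primitive arches}, the maximal subpaths between consecutive visits of the line $y=0$. Each arch $B_i$ is either \emph{positive}, of the form $\upstep P\downstep$ with $P$ an ordinary (nonnegative) Dyck path, or \emph{negative}, of the form $\downstep N\upstep$ with $N$ again an ordinary Dyck path after reflection. A positive arch contributes no flaws, while a negative arch of semilength $m$ contributes exactly $m$ flaws, since each of its $m$ many $\downstep$-steps lies weakly below $y=0$. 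Introducing the flaw-distribution polynomial $P_k(q):=\sum_{e=0}^{k}|D_{2k}^e|\,q^e$ and letting $C_i$ denote the $i$-th Catalan number, the first-return decomposition splits off either a positive or a negative first arch and yields the recursion
\[
P_k(q)=\sum_{i=0}^{k-1}C_i\,(1+q^{\,i+1})\,P_{k-1-i}(q),\qquad P_0(q)=1,
\]
where the summand records a first arch of semilength $i+1$ built from a Dyck path counted by $C_i$, positive (weight $1$) or negative (weight $q^{\,i+1}$), followed by an arbitrary remaining path counted by $P_{k-1-i}(q)$.

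It then suffices to show that $P_k(q)=C_k\,(1+q+\cdots+q^{k})$ for all $k\ge 0$, since reading off the coefficient of $q^e$ gives $|D_{2k}^e|=C_k$ for every $e\in\{0,1,\ldots,k\}$, which is exactly the assertion. I would establish this closed form by strong induction on $k$: the base case $k=0$ is immediate, and substituting the inductive hypothesis $P_{k-1-i}(q)=C_{k-1-i}\sum_{l=0}^{k-1-i}q^l$ into the recursion reduces the claim to verifying, for each fixed exponent $l\in\{0,\ldots,k\}$, that the coefficient of $q^l$ equals $C_k$. Expanding $(1+q^{\,i+1})\sum_{l=0}^{k-1-i}q^l$ and collecting terms, this coefficient works out to $\sum_{i=0}^{k-1-l}C_iC_{k-1-i}+\sum_{i=0}^{l-1}C_iC_{k-1-i}$.

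The crux of the argument is to see that this last expression always equals $C_k$. Using the symmetry $C_iC_{k-1-i}=C_{k-1-i}C_i$ of the Catalan convolution, I would reindex the second sum by $i\mapsto k-1-i$ so that it runs over $i\in\{k-l,\ldots,k-1\}$; since $\{0,\ldots,k-1-l\}$ and $\{k-l,\ldots,k-1\}$ are disjoint and together exhaust $\{0,\ldots,k-1\}$, the two sums combine into the full convolution $\sum_{i=0}^{k-1}C_iC_{k-1-i}$, which equals $C_k$ by the standard Catalan recurrence. I expect this reindexing-and-recombination step to be the main (though mild) obstacle, as it is where the seemingly lopsided contributions of positive and negative arches must conspire into a perfectly uniform flaw distribution. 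A more elegant but more delicate alternative would be to construct directly an explicit bijection between $D_{2k}^e$ and $D_{2k}^{e+1}$ that moves a single flaw across the line $y=0$; this is precisely the route the present paper sets out to sharpen into a minimum-change map, and the difficulty there lies in choosing the swapped portion canonically so that the map is well defined and invertible.
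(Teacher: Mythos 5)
Your proof is correct, but it takes a genuinely different route from the one the paper uses. The paper establishes Theorem~\ref{thm:chung-feller} bijectively: it recalls the classical swap bijection $f'$ (writing $x=u\circ\upstep\circ v\circ\downstep\circ w$ via the first $\upstep$-step above $y=0$ and the first return to $y=0$, and sending $x$ to $v\circ\downstep\circ u\circ\upstep\circ w$), which shows directly that all the sets $D_{2k}^e$ are equinumerous, whence each has size $\frac{1}{k+1}\binom{2k}{k}$ because together they partition the $\binom{2k}{k}$ free paths. You instead package the flaw statistic into the polynomial $P_k(q)=\sum_e |D_{2k}^e|q^e$, derive the first-return recursion $P_k(q)=\sum_{i=0}^{k-1}C_i(1+q^{i+1})P_{k-1-i}(q)$ from the arch decomposition, and verify $P_k(q)=C_k(1+q+\cdots+q^k)$ by induction; your coefficient extraction and the reindexing $i\mapsto k-1-i$ that merges the two partial convolutions into the full Segner convolution $\sum_{i=0}^{k-1}C_iC_{k-1-i}=C_k$ are all sound, including the boundary cases $l=0$ and $l=k$, and your flaw count for a negative arch (its first $\downstep$-step, starting on $y=0$, does count as a flaw) matches the paper's convention. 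What each approach buys: your enumerative argument is self-contained and yields the Catalan evaluation $|D_{2k}^e|=C_k$ in one stroke without invoking the symmetry of the partition, but it produces no explicit correspondence between $D_{2k}^e$ and $D_{2k}^{e+1}$; the bijective route is exactly what the rest of the paper needs, since its main contribution is to replace $f'$ by a minimum-change bijection $f$ altering only two positions. Your closing remark correctly identifies that refinement as the paper's actual goal.
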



Since its discovery, several proofs and generalizations of Theorem~\ref{thm:chung-feller} have appeared in the literature \cite{MacMahon:09, chung-feller:49, hodges:55, MR0224486, MR2382368, MR2776816, MR1840664, MR2167479, MR2571698}.
The standard combinatorial proof, first presented in \cite{hodges:55} and rediscovered in \cite{MR2382368}, establishes the following bijection $f'$ between the sets $D_{2k}^e$ and $D_{2k}^{e+1}$ (see Figure~\ref{fig:gx}): For any Dyck path $x\in D_{2k}^e$, considering the first $\upstep$-step above the line $y=0$ and the first $\downstep$-step returning to the line $y=0$, and denoting the corresponding indices by $a$ and $b$, the path $x=(x_1,x_2,\ldots,x_{2k})$ can be partitioned as $x=u\circ \upstep \circ v\circ \downstep\circ w$ where $u:=(x_1,\ldots,x_{a-1})$, $v:=(x_{a+1},\ldots,x_{b-1})$ and $w:=(x_{b+1},\ldots,x_{2k})$, and $\circ$ denotes the concatenation. Then the image of $x$ under $f'$ is defined as $f'(x):=v\circ \downstep\circ u\circ\upstep\circ w$.
It is easy to check that $f'$ increases the number of flaws by 1 and that it is indeed a bijection between $D_{2k}^e$ and $D_{2k}^{e+1}$ (see Figure~\ref{fig:chung-feller-bij}). Note however that the number of positions in which $x=u\circ \upstep \circ v\circ \downstep\circ w$ and $f'(x)=v\circ \downstep\circ u\circ\upstep\circ w$ differ can be large due to the swapping of the subpaths $u$ and $v$. In the worst case the two Dyck paths may differ in every position, see e.g.\ the paths $x'$ and $f'(x')$ on the right-hand side of Figure~\ref{fig:chung-feller-bij}.

\subsection{Our results}
\label{sec:results}

\begin{figure}
\centering
\PSforPDF{
 \psfrag{u}{$u$}
 \psfrag{v}{$v$}
 \psfrag{w}{$w$}
 \psfrag{x}{$x$}
 \psfrag{fsx}{$f'(x)$}
 \includegraphics{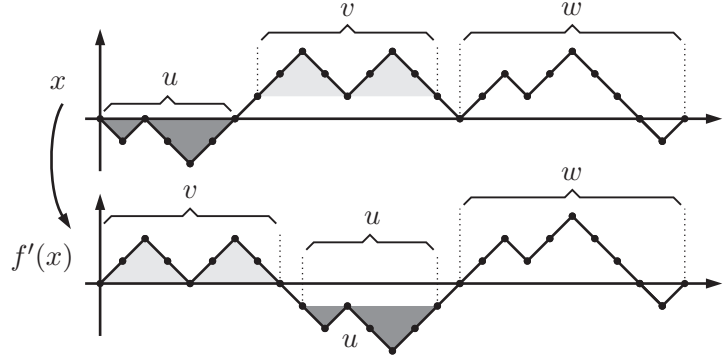}
}
\caption{Bijection $f'$ used to prove Theorem~\ref{thm:chung-feller}. The Dyck path $x$ has $4$ flaws, the Dyck path $f'(x)$ has $5$ flaws.}
\label{fig:gx}
\end{figure}

\begin{figure}
\centering
\PSforPDF{
 \psfrag{x}{$x$}
 \psfrag{xp}{$x'=g(x)$}
 \psfrag{xpp}{$f(x)=h(x')$}
 \psfrag{g}{$g$}
 \psfrag{h}{$h$}
 \psfrag{d0x}{$d_0(x)=3$}
 \psfrag{u1xp}{$u_1(x')=6$}
 \includegraphics{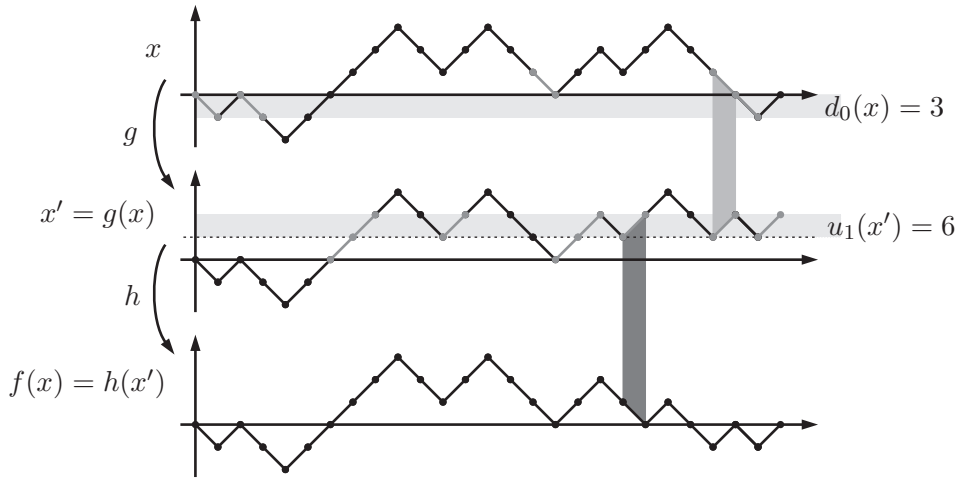}
}
\caption{Definition of the bijection $f$ from Theorem~\ref{thm:min-change-bij}. The Dyck path $x$ has $4$ flaws, the Dyck path $f(x)=h(g(x))$ has $5$ flaws, and $x$ and $f(x)$ differ in only two positions marked with gray vertical bars. In the upper part of the figure, all $\downstep$-steps of $x$ touching the line $y=0$ are drawn gray. In the middle part, all $\upstep$-steps of $x'$ touching the line $y=1$ are drawn gray.}
\label{fig:fx}
\end{figure}

\begin{figure}
\centering
\PSforPDF{
 \psfrag{xp}{$x'$}
 \psfrag{fsxp}{$f'(x')$}
 \psfrag{fs}{$f'$}
 \psfrag{d60}{\Large $D_6^0$}
 \psfrag{d61}{\Large $D_6^1$}
 \psfrag{d62}{\Large $D_6^2$}
 \psfrag{d63}{\Large $D_6^3$}
 \includegraphics{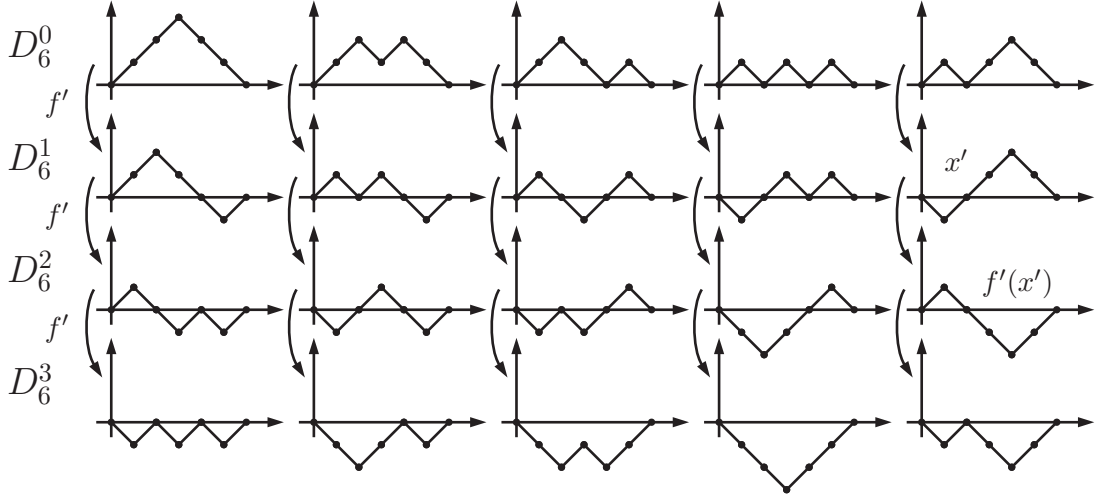}
}
\caption{Illustration of the bijection $f'$ used to prove Theorem~\ref{thm:chung-feller} on the set of all Dyck paths of length 6.}
\label{fig:chung-feller-bij}
\end{figure}

\begin{figure}
\centering
\PSforPDF{
 \psfrag{f}{$f$}
 \psfrag{d60}{\Large $D_6^0$}
 \psfrag{d61}{\Large $D_6^1$}
 \psfrag{d62}{\Large $D_6^2$}
 \psfrag{d63}{\Large $D_6^3$}
 \psfrag{a}{i}
 \psfrag{b}{ii}
 \psfrag{c}{iii}
 \psfrag{d}{iv}
 \psfrag{e}{v}
 \psfrag{01}{1}
 \psfrag{02}{2}
 \psfrag{03}{3}
 \psfrag{04}{4}
 \psfrag{05}{5}
 \psfrag{06}{6}
 \psfrag{07}{7}
 \psfrag{08}{8}
 \psfrag{09}{9}
 \psfrag{10}{10}
 \psfrag{a1}{11}
 \psfrag{12}{12}
 \psfrag{13}{13}
 \psfrag{14}{14}
 \psfrag{15}{15}
 \psfrag{16}{16}
 \psfrag{17}{17}
 \psfrag{18}{18}
 \psfrag{19}{19}
 \psfrag{20}{20}
 \psfrag{p1}{\small (6,2,4,3,5,1)}
 \psfrag{p2}{\small (6,4,5,2,3,1)}
 \psfrag{p3}{\small (4,2,3,1,6,5)}
 \psfrag{p4}{\small (2,1,4,3,6,5)}
 \psfrag{p5}{\small (2,1,6,4,5,3)}
 \includegraphics{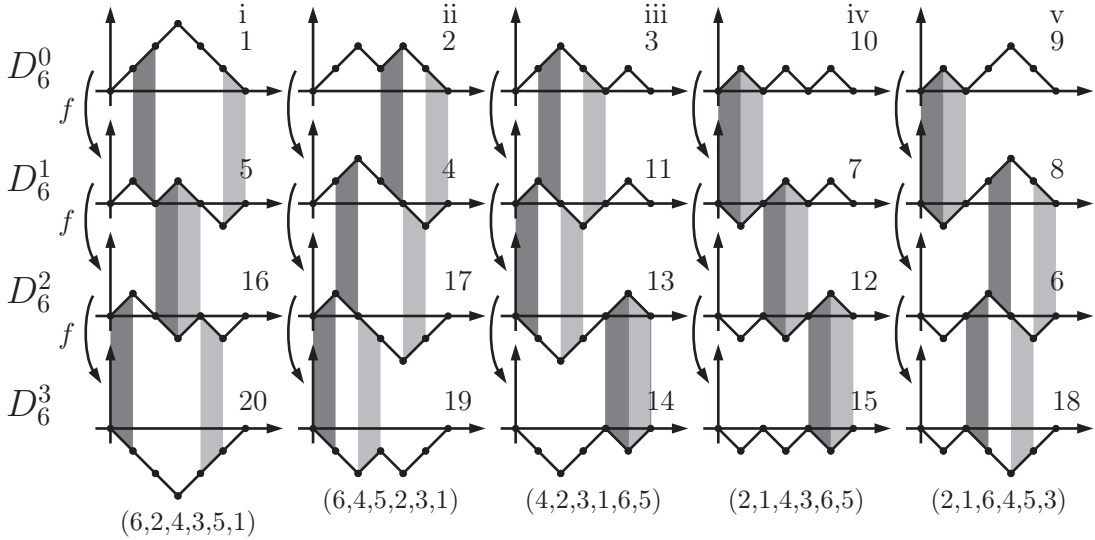}
}
\caption{Illustration of the minimum-change bijection $f$ from Theorem~\ref{thm:min-change-bij} on the set of all Dyck paths of length 6. The gray bars highlight the two positions that are flipped in each step, and the sequences at the bottom the entire sequence of positions that are flipped when repeatedly applying $f$. The numbers 1--20 and i--v, respectively, indicate related minimum-change orderings of all Dyck paths (with an arbitrary number of flaws) and those with zero flaws as explained in Section~\ref{sec:algo}.}
\label{fig:min-change-bij}
\end{figure}

The main result of this paper is the following strengthening of Theorem~\ref{thm:chung-feller}: We establish a bijection $f$ between $D_{2k}^e$ and $D_{2k}^{e+1}$ such that for any $x\in D_{2k}^e$, the Dyck paths $x$ and $f(x)$ differ in only \emph{two} positions (two changes are clearly the least possible number). In other words: $f(x)$ is obtained from $x$ by flipping a single $\upstep$-step and a single $\downstep$-step, leaving the initial and terminal parts of the path unchanged, and shifting down the middle part by two units.

This minimum-change bijection $f$ is defined as follows (see Figure~\ref{fig:fx}): For any lattice path $x$ and any $c\in\mathbb{Z}$, we let $u_c(x)$ and $d_c(x)$ denote the number of $\upstep$-steps or $\downstep$-steps, respectively, of $x$ starting on the line $y=c$. We say that an $\upstep$-step or $\downstep$-step of $x$ \emph{touches} the line $y=c$, if it starts or ends on this line.
For any $x\in D_{2k}^e$, $e<k$, we let $x':=g(x)$ denote the lattice path obtained from $x$ by flipping the $(d_0(x)+1)$-th $\downstep$-step of $x$ touching the line $y=0$. Similarly, we let $h(x')$ denote the lattice path obtained from $x'$ by flipping the $u_1(x')$-th $\upstep$-step of $x'$ touching the line $y=1$. We then define $f(x):=h(x')=h(g(x))$.
Figure~\ref{fig:min-change-bij} shows how $f$ acts on the set of all Dyck paths of length 6 (the two flipped positions are highlighted with gray vertical bars).

\begin{theorem}
\label{thm:min-change-bij}
For any $k\geq 1$ and any $e\in\{0,1,\ldots,k-1\}$ the mapping $f$ is a bijection between $D_{2k}^e$ and $D_{2k}^{e+1}$ such that for any $x\in D_{2k}^e$, the Dyck paths $x$ and $f(x)$ differ in exactly two positions.
\end{theorem}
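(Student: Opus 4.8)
My plan is to handle the two flips separately and then control their interaction, after first recording two reductions. Since a ``Dyck path'' here is merely any lattice path with $k$ many $\upstep$-steps and $k$ many $\downstep$-steps (there is no positivity constraint), and since $g$ turns one $\downstep$-step into an $\upstep$-step while $h$ does the reverse, the image $f(x)$ automatically has $k$ steps of each type; thus the only substantive claims are that $f(x)$ has \emph{exactly} $e+1$ flaws and that $f$ is a bijection. Moreover, once $f(x)\in D_{2k}^{e+1}$ is known, the ``exactly two positions'' statement is free: $g$ and $h$ each flip a single step, so $x$ and $f(x)$ differ in at most two positions, and in fewer than two only if $g$ and $h$ flip the same position, in which case $f(x)=x$; as $f(x)$ has a different number of flaws than $x$, this cannot happen. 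Finally, since Theorem~\ref{thm:chung-feller} gives $|D_{2k}^e|=|D_{2k}^{e+1}|$, it suffices to prove that $f$ maps $D_{2k}^e$ into $D_{2k}^{e+1}$ and is injective.

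Next I would set up the geometry. I decompose $x$ into its maximal excursions (arches) between consecutive returns to the line $y=0$, each arch being \emph{positive} (starting with an $\upstep$-step from height $0$ and ending with a $\downstep$-step from height $1$) or \emph{negative} (starting with a $\downstep$-step from height $0$ and ending with an $\upstep$-step to height $0$). The $\downstep$-steps touching $y=0$ then correspond, read left to right, to the arches, and $d_0(x)$ equals the number $m$ of negative arches. Because $e<k$ at least one arch is positive, so the total number of arches is at least $m+1$; hence the $(d_0(x)+1)$-th touching $\downstep$-step exists and $g$ is well defined. Writing $i$ for its position, $x'=g(x)$ agrees with $x$ before position $i$ and has all heights from position $i$ onward raised by $2$, so $x'$ runs from $(0,0)$ to $(2k,2)$; as $x'$ must cross the line $y=3/2$ upward, $u_1(x')\ge 1$, and since the $\upstep$-steps from height $1$ to $2$ are among those touching $y=1$, the $u_1(x')$-th such step exists and $h$ is well defined. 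Let $j$ be the position of the $\upstep$-step flipped by $h$.

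The technical heart of the proof is to pin down $i$ and $j$ and to show that $j<i$. Granting this, $x$ and $f(x)$ coincide outside positions $j<i$, the ``middle'' piece strictly between them is lowered by two units, and the flaw balance reduces to the identity
\[
\big[H(i-1)\le 0\big]=\big[H(j-1)\le 0\big]+\#\{\text{middle $\downstep$-steps starting at height $1$ or $2$}\}-1,
\]
where $H(\cdot)$ is the height profile of $x$ and $[\,\cdot\,]$ is an Iverson bracket. I expect establishing $j<i$ together with this identity to be the main obstacle: it forces me to compare the $\upstep$-steps of $x'$ crossing $y=3/2$ (which determine $u_1(x')$, hence $j$) with the $\downstep$-steps of $x$ crossing $y=1/2$ and $y=-1/2$, tracking how the $+2$ shift after position $i$ converts crossings of one cut into crossings of another. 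I would carry this out by a case analysis according to whether arch $m+1$ is positive or negative, showing in each case that the $u_1(x')$-th $\upstep$-step touching $y=1$ already occurs among the unchanged steps to the left of $i$ (so that $j<i$), and that the number of middle $\downstep$-steps starting at height $1$ or $2$ compensates the term $[H(i-1)\le 0]$ so that the total flaw count increases by exactly one.

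Finally, for injectivity I would exhibit an inverse of the same flavour that runs ``downwards'': given $y\in D_{2k}^{e+1}$, the two positions altered by $f$ should be recoverable intrinsically from $y$ (position $j$ as a distinguished $\downstep$-step touching $y=0$, and position $i$ as a distinguished $\upstep$-step lying below it), and flipping them back returns $x$. Verifying that this recovery is well defined and genuinely inverts $f$ — equivalently, that the counting identity above holds in reverse — completes the argument; combined with $|D_{2k}^e|=|D_{2k}^{e+1}|$ it yields bijectivity without a separate surjectivity check.
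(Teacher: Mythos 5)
Your reductions are sound, and your outline tracks the paper's strategy closely: you correctly observe that the step count of $f(x)$ is automatic, that ``exactly two positions'' follows for free once the flaw count is known to change, that the crux is showing the flipped $\upstep$-step lies strictly to the left of the flipped $\downstep$-step together with a flaw-balance identity, and that injectivity should come from an explicit inverse built from the same kind of single-step flips (this is precisely the paper's $h'\circ g'$). The only genuine divergence is harmless: you import $|D_{2k}^e|=|D_{2k}^{e+1}|$ from Theorem~\ref{thm:chung-feller}, whereas the paper deduces this equality from the injectivity of $f$ together with the mirror symmetry $|D_{2k}^0|=|D_{2k}^k|$, thereby reproving the Chung--Feller theorem rather than assuming it.

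However, as written your argument has a genuine gap: every substantive claim is announced rather than established. The inequality $j<i$, the Iverson-bracket identity controlling the flaw count, and the well-definedness and correctness of the proposed inverse are exactly the content of the paper's Lemmas~\ref{lem:g-inverse}, \ref{lem:h-inverse}, \ref{lem:01-positions} and \ref{lem:flaws++}, and you explicitly defer all of them (``I expect \dots to be the main obstacle'', ``I would carry this out by a case analysis'', ``Verifying that this recovery is well defined \dots completes the argument''). None of these is routine. The paper's route is to prove, by a short case analysis on whether the flipped $\downstep$-step starts on $y=0$ or $y=1$, the counting relation $u_1(x')=u_1(x_\ell')+u_0(x_\ell')$, where $x_\ell'$ is the prefix of $x'=g(x)$ to the left of the flipped position; this single relation identifies $h'$ as $g^{-1}$, yields $\pos(h,g(x))<\pos(g,x)$, and shows that no $\upstep$-step touching $y=1$ lies strictly between the two flipped positions, from which the flaw increment of exactly one falls out (the paper counts $\upstep$-steps below $y=0$ rather than $\downstep$-steps, a cosmetic difference from your identity). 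Your crossing-number bookkeeping across the cuts $y=3/2$, $y=1/2$, $y=-1/2$ would plausibly arrive at the same relations, but until that case analysis and the verification of the inverse are actually executed, what you have is a correct plan rather than a proof.
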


\subsubsection{Efficient algorithms}
\label{sec:algo}

We also present an algorithm that allows to compute repeated applications of the minimum-change bijection $f$ from Theorem~\ref{thm:min-change-bij} efficiently.

\begin{theorem}
\label{thm:algo}
There is an algorithm that computes for any given $k\geq 1$, $e\in\{0,1,\ldots,k-1\}$ and $x\in D_{2k}^e$ each of the Dyck paths $f(x),f^2(x),\ldots,f^{k-e}(x)$ in time $\cO(1)$ and space $\cO(k)$ (overall time including initialization is $\cO(k)$).
\end{theorem}

Being able to generate each Dyck path in constant time is clearly best possible. We implemented this algorithm in C++, and we invite the reader to experiment with this code, which can be downloaded from the authors' websites \cite{www}.

Under this algorithmic lense, the minimum-change bijection $f$ can be seen as a \emph{combinatorial Gray code}. This term refers to the problem of efficiently generating all objects in a particular combinatorial class --- e.g. permutations, partitions, subsets, strings, trees, Dyck paths etc. --- which is a fundamental task in the area of combinatorial algorithms, as such generation algorithms are used as building blocks in a wide range of practical applications (the survey \cite{MR1491049} lists numerous references, see also \cite{MR0396274,MR993775}).
Let us add a few remarks and pointers to related work on combinatorial Gray codes that put the results of this paper into perspective.
Gray codes are named after Frank Gray, who invented a method to generate all $2^n$ subsets of $\{1,2,\ldots,n\}$ such that any two consecutive subsets differ by adding or removing a single element \cite{gray:patent} (see also \cite[Section 7.2.1.1]{knuth}). Clearly, this problem is equivalent to generating all $2^n$ bitstrings of length $n$ such that any two consecutive bitstrings differ in exactly one bit.
A closely related problem that has received considerable attention in the literature is to generate all subsets of $\{1,2,\ldots,n\}$ of size exactly $k$ for some $k\leq n$, such that any two consecutive subsets differ by exchanging a single element \cite{MR0349274,MR0366085,MR0424386} (see also \cite[Section 7.2.1.3]{knuth}). Observe that this is equivalent to generating all bitstrings of length $n$ with exactly $k$ entries equal to 1 such that any two consecutive bitstrings differ in exactly two bits (a 0-bit and a 1-bit are swapped).
A special case are the Dyck paths of length $2k$ (with an arbitrary number of flaws) considered in this paper, which can be interpreted as a subset of $\{1,2,\ldots,2k\}$ of size $k$ or a bitstring of length $2k$ with $k$ entries equal to 1 (every $\upstep$-step corresponds to a 1-bit, and every $\downstep$-step to a 0-bit).
Combinatorial Gray codes for these $(n,k)$-combinations can be computed efficiently even if we impose further constraints on the allowed swaps, e.g.\ we may only allow swaps of a 0-bit and 1-bit when all bits in between are equal to 0 \cite{MR782221}. Even more restrictively, we may only allow swaps of bits that are at most 2 positions apart \cite{MR995888,MR1352777}, or only between adjacent bits \cite{MR821383,MR936104}. In the latter case $n$ and $k$ are required to be even and odd, respectively, or $k
\in\{0,1,n-1,n\}$. In particular, for odd $k$ we may generate all Dyck paths of length $2k$ (with an arbitrary number of flaws) using only such adjacent swaps (a valley $\upstep\downstep$ becomes a peak $\downstep\upstep$ or vice versa; such an ordering 1--20 is indicated in Figure~\ref{fig:min-change-bij} for $k=3$).


\begin{figure}
\centering
\PSforPDF{
 \psfrag{d0}{\large $D_6^0$}
 \psfrag{i1}{(a)}
 \psfrag{i2}{(b)}
 \psfrag{i3}{(c)}
 \psfrag{i4}{(d)}
 \psfrag{i5}{(e)}
 \psfrag{p01}{\texttt{((()))}}
 \psfrag{p02}{\texttt{(()())}}
 \psfrag{p03}{\texttt{(())()}}
 \psfrag{p04}{\texttt{()()()}}
 \psfrag{p05}{\texttt{()(())}}
 \includegraphics{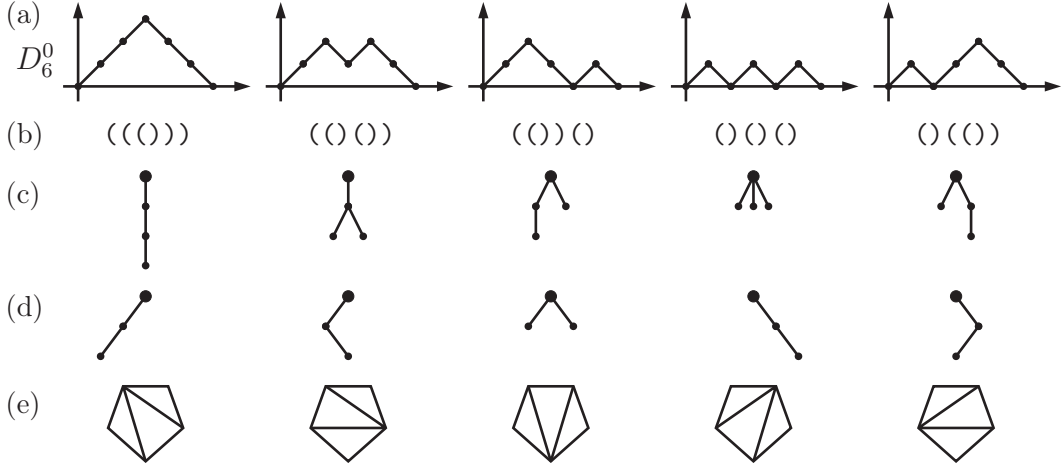}
}
\caption{Representation of (a) Dyck paths with $2k$ steps and zero flaws for $k=3$ as (b) well-formed parentheses expressions consisting of $k$ pairs of parentheses, (c) ordered trees with $k$ edges, (d) binary trees with $k$ vertices, (e) triangulations of a convex $(k+2)$-gon.}
\label{fig:catalan}
\end{figure}

We now restrict our attention to Dyck paths with $2k$ steps and zero flaws, which via standard Catalan bijections can also be viewed as well-formed parentheses expressions consisting of $k$ pairs of parentheses, (rooted) ordered trees with $k$ edges, (rooted) binary trees with $k$ vertices, triangulations of a convex $(k+2)$-gon etc.\ (see Figure~\ref{fig:catalan}, all the bijections between these classes and many more are treated in depth in \cite{stanley-cat:15}). Constant average time algorithms for generating all Dyck paths in $D_{2k}^0$ such that any two consecutive paths differ in exactly two positions have been developed in \cite{MR1041167,MR1645246}. As before, we may even impose the restriction of only allowing swaps of a 0-bit and 1-bit when all bits in between are equal to 0, or only swaps between adjacent bits \cite{MR1664740}, where in the latter case $k$ is required to be even or strictly less than 5 (such an ordering with roman numbers i--v is indicated in Figure~\ref{fig:min-change-bij} for 
$k=3$).
Summarizing, the minimum-change bijection $f$ from Theorems~\ref{thm:min-change-bij} and \ref{thm:algo} nicely complements the minimum-change generation algorithms for Dyck paths (or parentheses expressions, ordered trees, binary trees, triangulations etc.) known from the literature. In a sense, our bijection is orthogonal to the combinatorial Gray codes mentioned before and indicated in Figure~\ref{fig:min-change-bij} (the bijection $f$ operates on the columns in this figure).
We will show that for each $x\in D_{2k}^0$, the Dyck path $f^k(x)$ is obtained by mirroring $x$ at the horizontal line $y=0$ (see Figure~\ref{fig:min-change-bij}, where the Dyck paths at the top and bottom of each column are mirror images of each other).
It follows that our bijection can be combined with the before-mentioned algorithms that generate the Dyck paths in $D_{2k}^0$ and those in $D_{2k}^k$ to obtain new combinatorial Gray codes that enumerate all Dyck paths (with an arbitrary number of flaws) by flipping two positions at a time.
Such a Gray code is obtained by iterating through columns and rows in Figure~\ref{fig:min-change-bij} in a saw-tooth fashion. Specifically, $f$ and $f^{-1}$ are applied to move along the columns, and the before-mentioned generation algorithms for $D_{2k}^0$ and $D_{2k}^k$ are used to move along the first and last row.
Similarly, we obtain new combinatorial Gray codes that enumerate all the Dyck paths in $D_{2k}^e$ for some $e\in\{0,1,\ldots,k\}$ by flipping $4\cdot \min\{e,k-e\}+2$ positions at a time (again, we move through Figure~\ref{fig:min-change-bij} in a saw-tooth fashion).


\subsubsection{Applications}

We conclude this section by presenting two concrete applications of Theorem~\ref{thm:min-change-bij}.
To discuss them we consider the \emph{odd graph} $O_{2k+1}$, defined for any integer $k\geq 1$ as the graph that has as vertices all $k$-element subsets of $\{1,2,\ldots,2k+1\}$, with an edge between any two sets that are disjoint.
Similarly, the \emph{middle levels graph} $M_{2k+1}$ has as vertices all $k$-element and all $(k+1)$-element subsets of $\{1,2,\ldots,2k+1\}$, with an edge between any two sets where one is a subset of the other (see Figure~\ref{fig:kneser}).

The odd graph and the middle levels graph are families of vertex-transitive graphs that have long been conjectured to have a Hamilton cycle for every value of $k\geq 1$ with one notable exception, the Petersen graph $O_5$ (these conjectures originated in \cite{MR0457282,MR556008} and \cite{MR737021,MR737262}, respectively, and they are mentioned e.g.\ in the popular books \cite{MR2034896,knuth,MR2858033}).
Despite the simple definition and despite considerable efforts of many researchers during that last 40 years (see \cite{MR0389663,MR510592,MR888679,MR1778200,MR1883565,MR1999733,MR2046083,MR2836824} and \cite{savage:93,MR1350586,MR1329390,MR2046083,horakEtAl:05,Gregor20102448,MR2548541,shimada-amano}, respectively), these conjectures remained open for very long (a detailed account of the historic developments can be found in \cite{muetze-su:15}). Only recently, it has been shown that the middle levels graph $M_{2k+1}$ indeed has a Hamilton cycle for every $k\geq 1$ \cite{MR3483129} (see also \cite{MR3446435,muetze-nummenpalo:16}).
Complementing earlier results \cite{MR962224,MR962223,MR1268348,MR2128031,muetze-weber:12,MR3107997} in this direction, we ask more generally, for which divisors $\ell$ of $n:=|V(G)|$ does one of these graphs $G$ have a \emph{$\cC_\ell$-factor}, i.e., a collection of $n/\ell$ many vertex-disjoint cycles, each of length $\ell$ ($V(G)$ denotes the vertex set of $G$)? It turns out that the only general case where a positive answer to this question is known is the before-mentioned Hamiltonicity of the middle levels graph (the case $\ell=n$).
We use the minimum-change bijection $f$ from Theorem~\ref{thm:min-change-bij} to produce two more positive answers (for values $\ell<n$).

\begin{figure}
\centering
\PSforPDF{
 \psfrag{o5}{\Large $O_5$}
 \psfrag{m5}{\Large $M_5$}
 \psfrag{s12}{\small $\{1,2\}$}
 \psfrag{s13}{\small $\{1,3\}$}
 \psfrag{s14}{\small $\{1,4\}$}
 \psfrag{s34}{\small $\{3,4\}$}
 \psfrag{s24}{\small $\{2,4\}$}
 \psfrag{s23}{\small $\{2,3\}$}
 \psfrag{s25}{\small $\{2,5\}$}
 \psfrag{s35}{\small $\{3,5\}$}
 \psfrag{s45}{\small $\{4,5\}$}
 \psfrag{s15}{\small $\{1,5\}$}
 \psfrag{c12}{\small $\{3,4,5\}$}
 \psfrag{c13}{\small $\{2,4,5\}$}
 \psfrag{c14}{\small $\{2,3,5\}$}
 \psfrag{c34}{\small $\{1,2,5\}$}
 \psfrag{c24}{\small $\{1,3,5\}$}
 \psfrag{c23}{\small $\{1,4,5\}$}
 \psfrag{c25}{\small $\{1,3,4\}$}
 \psfrag{c35}{\small $\{1,2,4\}$}
 \psfrag{c45}{\small $\{1,2,3\}$}
 \psfrag{c15}{\small $\{2,3,4\}$}
 \includegraphics{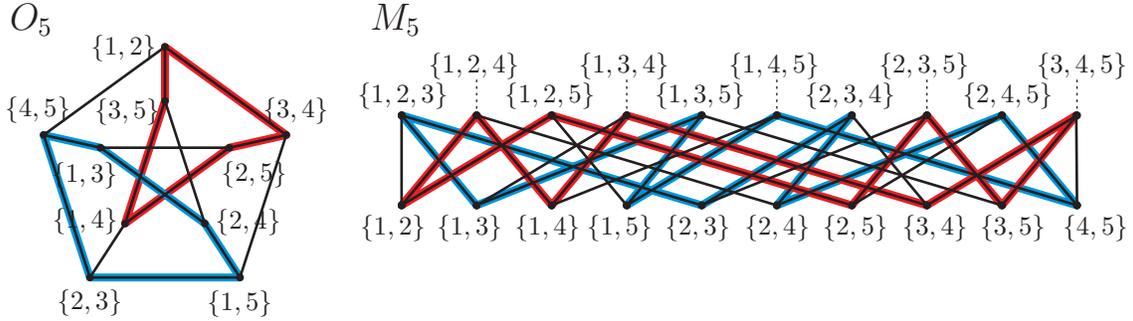}
}
\caption{Illustration of the odd graph $O_{2k+1}$ (left) and the middle levels graph $M_{2k+1}$ (right) for $k=2$, and a $\cC_{2k+1}$-factor and a $\cC_{4k+2}$-factor in these graphs (each consisting of two cycles, one colored red and one colored blue), respectively,   constructed from our minimum-change bijection $f$.}
\label{fig:kneser}
\end{figure}

\begin{theorem}
\label{thm:odd-2f}
For any $k\geq 1$, the odd graph $O_{2k+1}$ has a $\cC_{2k+1}$-factor.
\end{theorem}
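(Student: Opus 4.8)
The plan is to produce the $\cC_{2k+1}$-factor by decomposing the vertex set of $O_{2k+1}$ into cyclic-rotation orbits and realizing each orbit as a single cycle of length $2k+1$, with the bijection $f$ governing the cyclic order inside an orbit.

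First I would record the orbit structure. Identify a $k$-subset of $\{1,\dots,2k+1\}$ with a binary string of length $2k+1$ carrying $k$ ones, and let $\sigma$ be the cyclic shift $i\mapsto i+1\pmod{2k+1}$. If such a string had a nontrivial period $d\mid(2k+1)$ with $d<2k+1$, then $(2k+1)\mid kd$, which together with $\gcd(k,2k+1)=1$ forces $d=2k+1$, a contradiction; hence every $\sigma$-orbit is free of size exactly $2k+1$, and there are precisely $\binom{2k+1}{k}/(2k+1)=C_k$ of them. This already matches the number of cycles a $\cC_{2k+1}$-factor must contain, so it suffices to turn each orbit into one $(2k+1)$-cycle of $O_{2k+1}$. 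To anchor the bijection $f$ to an orbit I would use the cycle lemma: reading ones as $\upstep$-steps and zeros as $\downstep$-steps, every orbit contains a unique rotation of the form $\downstep\circ P$ with $P\in D_{2k}^0$, which gives a canonical bijection between orbits and Dyck paths in $D_{2k}^0$ and a canonical base point in each orbit.

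Next I would define the cyclic order on the $2k+1$ vertices of a fixed orbit. The starting point is that two disjoint $k$-subsets $A,B$ are complementary on $\{1,\dots,2k+1\}\setminus\{m\}$ for a unique \emph{pivot} $m\notin A\cup B$, so an edge of $O_{2k+1}$ is encoded by a pivot together with a length-$2k$ binary string and its bitwise complement; in path language $A$ becomes a path and $B$ its reflection in the line $y=0$, which is exactly the image under $f^k$ by the mirror property of $f$ noted earlier. I would then use $f$ together with the canonical base point $P$ to prescribe, vertex by vertex, which pivot (equivalently which rotation power $\sigma^t$) to take when passing to the successor, so that the $2k+1$ successive pivots sweep through all of $\{1,\dots,2k+1\}$ and the whole traversal is tied to a single pass through the Chung--Feller sequence $P,f(P),\dots,f^k(P)$. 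Disjointness of consecutive vertices, i.e.\ that each step really is an edge of $O_{2k+1}$, is built into this pivot/complement description and hence automatic.

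The heart of the argument, and the step I expect to be the main obstacle, is to show that this $f$-driven successor rule visits every vertex of the orbit exactly once, i.e.\ yields a single $(2k+1)$-cycle rather than a union of shorter cycles. That genuine care is required is seen from the failure of the naive alternative of stepping by one fixed rotation power $\sigma^t$: there $S+a$ and $S+a+t$ are disjoint only when $t$ avoids the difference set $S-S\subseteq\mathbb{Z}_{2k+1}$, and for $k\ge3$ there are orbits for which $S-S=\mathbb{Z}_{2k+1}$ (for instance the perfect difference set $\{1,2,4\}$ for $k=3$, where $2k+1=7$), so \emph{no} constant shift $t$ works at all. This is precisely why the nonuniform, $f$-prescribed pivots are needed, and why their single-cycle property must be established by exploiting the exact positions at which $f$ flips its $\upstep$- and $\downstep$-steps. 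Once this single-cycle-plus-disjointness claim is proved for every orbit, the $C_k$ resulting cycles are automatically vertex-disjoint and cover all of $V(O_{2k+1})$, giving the desired $\cC_{2k+1}$-factor.
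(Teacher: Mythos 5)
Your plan rests on decomposing $V(O_{2k+1})$ into orbits of the cyclic shift $\sigma$ and realizing each orbit as a single $(2k+1)$-cycle, but that decomposition cannot work: some rotation orbits are \emph{independent sets} in $O_{2k+1}$, so they carry no cycle at all. Your own example already shows this. For $k=3$ take $S=\{1,2,4\}\subseteq\mathbb{Z}_7$, a planar difference set with $S-S=\mathbb{Z}_7$. Two orbit members $S+a$ and $S+b$ are disjoint if and only if $b-a\notin S-S$, so here no two members of the orbit are disjoint, and the subgraph of $O_7$ induced on this orbit has no edges whatsoever. Letting the rotation power $t$ vary from vertex to vertex does not rescue this, because each individual successor $\sigma^t(v)$ must still be disjoint from $v$, and for every $v$ in this orbit and every $t$ it is not. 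Consequently the assertion that disjointness of consecutive vertices is ``built in and hence automatic'' is false, and the step you defer as the ``main obstacle'' (one long cycle versus several short ones) is not the real obstacle: the real obstacle is that the cycles of a $\cC_{2k+1}$-factor cannot be rotation orbits in the first place.

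The paper partitions the vertex set differently. For each $x\in D_{2k}^0$ it takes the Chung--Feller sequence $x^0=x$, $y^i=g(x^i)$, $x^{i+1}=h(y^i)$ produced by the minimum-change bijection, reads the $k$-sets $x^0,\ldots,x^k$ as subsets of $[2k]\subseteq[2k+1]$, and replaces each $(k+1)$-set $y^i$ by $\ol{y^i}=([2k]\setminus y^i)\cup\{2k+1\}$. Adjacency of consecutive vertices is then immediate from $y^i\supset x^i$ and $y^i\supset x^{i+1}$, the cycle closes up because $f^k(x)=\ol{x}$ (Lemma~\ref{lem:altperm}), and the $C_k$ cycles are vertex-disjoint by the bijectivity of $g$ and $h$ together with the count $(2k+1)C_k=\binom{2k+1}{k}$. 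Already for $k=2$ the resulting cycle through $\{1,2\}$ also contains $\{3,5\}$, which lies in a different rotation orbit, so the two partitions genuinely differ; if you want to salvage your outline, you would have to abandon the orbit decomposition and work with the Chung--Feller columns directly.
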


\begin{theorem}
\label{thm:middle-2f}
For any $k\geq 1$, the middle levels graph $M_{2k+1}$ has a $\cC_{4k+2}$-factor.
\end{theorem}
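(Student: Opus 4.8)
The plan is to exploit the free action of the cyclic group $\mathbb{Z}_{2k+1}=\langle\sigma\rangle$ on $M_{2k+1}$, where $\sigma$ adds $1$ modulo $2k+1$ to every element of a set. Since $\gcd(k,2k+1)=\gcd(k+1,2k+1)=1$, no nontrivial rotation fixes a $k$-set or a $(k+1)$-set, so $\sigma$ partitions each of the two levels into orbits of size exactly $2k+1$; as $\binom{2k+1}{k}=(2k+1)C_k$, there are precisely $C_k$ orbits on each level. A $\cC_{4k+2}$-factor must consist of $2\binom{2k+1}{k}/(4k+2)=C_k$ cycles, so I would build one $(4k+2)$-cycle per chain of $f$ and make every cycle $\sigma$-invariant, forcing it to consist of exactly one orbit of $k$-sets together with one orbit of $(k+1)$-sets, alternating along the cycle.

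To index these orbits by Dyck paths I would read a vertex as the characteristic vector of its set and apply the cycle lemma: among the $2k+1$ rotations of a $k$-set there are exactly $k+1$ whose last entry makes the length-$2k$ prefix a balanced word, i.e.\ a Dyck path, and these realize each flaw value $e\in\{0,1,\dots,k\}$ exactly once; the same holds on the upper level. Thus each $\sigma$-orbit determines a $(k+1)$-tuple of Dyck paths, one of each flaw number. The key step, and here Theorem~\ref{thm:min-change-bij} enters, is to prove that this tuple is precisely a chain $y,f(y),f^2(y),\dots,f^k(y)$: concretely, I would show that appending a fixed step to $x\in D_{2k}^e$ and to $f(x)$ yields two vertices lying in the same $\sigma$-orbit, i.e.\ that passing from $x$ to $f(x)$ corresponds, after appending, to a single cyclic rotation. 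This identifies the orbits on both levels with the $C_k$ chains of $f$ (equivalently with $D_{2k}^0$, the columns of Figure~\ref{fig:min-change-bij}) and pairs each lower orbit with the upper orbit of the same chain.

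Given this correspondence I would assemble the cycle of a chain in a there-and-back fashion. Reading $x\mapsto f(x)$ as flipping one $\downstep$-step and one $\upstep$-step, the two changed positions translate into a length-two detour in $M_{2k+1}$ that goes from a $k$-set up to a $(k+1)$-set and back down to a $k$-set differing in two elements; concatenating these detours along $y,f(y),\dots,f^k(y)$ traces a path through $k+1$ sets on the lower level and $k$ on the upper level. Since $f^k(y)$ is the mirror image of $y$ (reflection at $y=0$ corresponds to complementation of the characteristic vector, a level-exchanging automorphism of $M_{2k+1}$), I would use this symmetry to turn around at the top of the chain and return along the complementary sets, picking up the remaining $k$ lower and $k+1$ upper vertices and closing up after $2k+(2k+2)=4k+2$ edges. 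As the lower and upper vertices used are each a single $\sigma$-orbit, the cycles for distinct $y\in D_{2k}^0$ are vertex-disjoint and together cover $V(M_{2k+1})$, giving the desired $\cC_{4k+2}$-factor.

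The main obstacle is the italicized key step: verifying that $f$ acts as a single cyclic rotation after appending a step. This requires matching the explicit definition of $f$ --- flipping the $(d_0(x)+1)$-th $\downstep$-step touching $y=0$ and the $u_1(x')$-th $\upstep$-step touching $y=1$ --- against the rotation dictated by the cycle lemma, and it is here that both the minimum-change feature and the bijectivity of $f$ are indispensable: the two-position change is exactly what keeps consecutive Dyck paths in a common $\sigma$-orbit, while the bijectivity guarantees that the assembled closed walk visits each vertex of the orbit-pair once rather than splitting into shorter cycles. A secondary point is to confirm that the there-and-back walk is a single $(4k+2)$-cycle and not two $(2k+1)$-cycles, which I would settle from a $\gcd$ computation on the rotation that $\sigma$ induces on the cycle together with the bijectivity of $f$.
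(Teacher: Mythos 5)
Your reduction stands or falls with the step you yourself single out as the main obstacle, and unfortunately that step is false: the chains of $f$ do not coincide with the cyclic rotation classes. Take $k=2$ and $x=\upstep\upstep\downstep\downstep\in D_4^0$. Here $d_0(x)=0$, so $g$ flips the first $\downstep$-step touching $y=0$, namely the step at position $4$, giving $x'=\upstep\upstep\downstep\upstep$ with $u_1(x')=2$; hence $h$ flips the second $\upstep$-step touching $y=1$, at position $2$, and $f(x)=\upstep\downstep\downstep\upstep$. Appending a final $\downstep$-step turns $x$ and $f(x)$ into the $2$-subsets $\{1,2\}$ and $\{1,4\}$ of $\{1,\ldots,5\}$, and the $\sigma$-orbit of $\{1,2\}$ is $\{\{1,2\},\{2,3\},\{3,4\},\{4,5\},\{1,5\}\}$, which does not contain $\{1,4\}$. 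Concretely, the balanced rotations of $\{1,2\}$ yield the tuple $(\upstep\upstep\downstep\downstep,\ \downstep\upstep\upstep\downstep,\ \downstep\downstep\upstep\upstep)$ with flaws $0,1,2$, whereas the $f$-chain of $\upstep\upstep\downstep\downstep$ is $(\upstep\upstep\downstep\downstep,\ \upstep\downstep\downstep\upstep,\ \downstep\downstep\upstep\upstep)$: the two partitions of $L_{4,2}$ agree only at the endpoints. The same failure is visible for $k=3$ in Figure~\ref{fig:min-change-bij}, where the first column flips positions $6$ and $2$, sending $\{1,2,3\}$ to $\{1,3,6\}$ after appending a step, and $\{1,3,6\}$ is not a rotation of $\{1,2,3\}$ modulo $7$. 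No choice of appended step or direction of rotation repairs this. Since your cycles are required to be $\sigma$-invariant and to consist of exactly one orbit of $k$-sets paired with one orbit of $(k+1)$-sets, the disjointness-and-covering argument collapses once this identification fails.

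The there-and-back idea itself is essentially correct, but it must be anchored to the $f$-chains directly rather than to the rotation action. The paper's route is: view the chain $p(x)=(x^0,y^0,x^1,\ldots,y^{k-1},x^k)$ for $x\in D_{2k}^0$ as a sequence of $k$- and $(k+1)$-subsets of $[2k]\subset[2k+1]$, replace each $y^i$ by $([2k]\setminus y^i)\cup\{2k+1\}$ to obtain a cycle of length $2k+1$ in the odd graph $O_{2k+1}$ (it closes because $f^k(x)=\ol{x}$, i.e., $x^k=[2k]\setminus x^0$), and deduce disjointness of these $C_k$ cycles from the bijectivity of $g$ and $h$ (Lemmas~\ref{lem:g-inverse} and~\ref{lem:h-inverse}) rather than from any symmetry. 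A separate doubling lemma --- traverse an odd cycle twice, complementing every second vertex --- then converts this $\cC_{2k+1}$-factor of $O_{2k+1}$ into a $\cC_{4k+2}$-factor of $M_{2k+1}$, and the doubled cycle is precisely your ``up the chain, then back along the complements'' walk. Note that the element $2k+1$ plays a distinguished role here, so the resulting cycles are genuinely not $\sigma$-invariant; to salvage your write-up you should drop the orbit identification and the cycle lemma entirely and derive disjointness from the injectivity of $f$.
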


Note that the $\cC_{2k+1}$-factor of the odd graph and the $\cC_{4k+2}$-factor of the middle levels graph each consist of $C_k$ (the $k$-th Catalan number) many cycles (see Figure~\ref{fig:kneser}).
It follows that these cycle-factors have exponentially (in $k$) many cycles of linear length.
It remains an open problem whether these cycles can be joined in a straightforward way to e.g.\ a Hamilton cycle.

\section{Proofs}

In this section we present the proofs of Theorems~\ref{thm:min-change-bij}--\ref{thm:middle-2f}.
We begin by establishing several lemmas that capture various key properties of the mappings $g$ and $h$ defined in Section~\ref{sec:results} (recall that the minimum-change bijection $f$ is the composition of $g$ and $h$).

\subsection{Combinatorial properties of the mappings $g,h$ and $f$}

In the following we consider the set $L_{2k,k}$ of all lattice paths lattice with $2k$ steps and exactly $k$ many $\upstep$-steps. These paths end at the point $(2k,0)$, and we have $L_{2k,k}=D_{2k}^0\cup D_{2k}^1\cup\cdots \cup D_{2k}^k$.
We also consider the set $L_{2k,k+1}$ of all lattice paths with $2k$ steps and exactly $k+1$ many $\upstep$-steps. These paths clearly end at the point $(2k,2)$.
Our proofs deal with four bijections between the following sets of lattice paths
\begin{align*}
  g  \colon& L_{2k,k}\setminus D_{2k}^k\rightarrow L_{2k,k+1} \enspace, \\
  g' \colon& L_{2k,k}\setminus D_{2k}^0\rightarrow L_{2k,k+1} \enspace, \\
  h  \colon& L_{2k,k+1}\rightarrow L_{2k,k}\setminus D_{2k}^0 \enspace, \\
  h' \colon& L_{2k,k+1}\rightarrow L_{2k,k}\setminus D_{2k}^k \enspace.
\end{align*}

Each of these bijections flips exactly one step of the lattice path that is provided as a function argument: $g$ and $g'$ replace a $\downstep$-step by an $\upstep$-step, and $h$ and $h'$ replace an $\upstep$-step by a $\downstep$-step. Recall the definitions of the mappings $g$ and $h$ from Section~\ref{sec:results}. The mappings $g'$ and $h'$ are defined very similarly as follows: For any $x\in L_{2k,k}\setminus D_{2k}^0$, we let $g'(x)$ denote the lattice path obtained from $x$ by flipping the $d_0(x)$-th $\downstep$-step of $x$ touching the line $y=0$ (in the definition of $g(x)$, we instead flip the $(d_0(x)+1)$-th $\downstep$-step). Similarly, for any $x\in L_{2k,k+1}$, we let $h'(x)$ denote the lattice paths obtained from $x$ by flipping the $(u_1(x)+1)$-th $\upstep$-step of $x$ touching the line $y=1$ (in the definition of $h(x)$, we instead flip the $u_1(x)$-th $\upstep$-step).
For any of the four mappings $\gamma\in\{g,g',h,h'\}$ from before and any lattice path $x$ from the corresponding domain, we let $\pos(\gamma,x)\in\{1,2,\ldots,2k\}$ denote the position in which the lattice paths $x$ and $\gamma(x)$ differ.

Note that $g$ is well-defined, as a lattice path $x\in L_{2k,k}\setminus D_{2k}^k$ has less than $k$ flaws and therefore at least one $\downstep$-step that ends on the line $y=0$ (which does not contribute to $d_0(x)$), so $x$ has at least $(d_0(x)+1)$ many $\downstep$-steps touching the line $y=0$. Similarly, $h'$ is well-defined, as a lattice path $x\in L_{2k,k+1}$ starts at $(0,0)$ and ends at $(2k,2)$ and therefore has at least one $\upstep$-step that ends on the line $y=1$ (which does not contribute to $u_1(x)$), so $x$ has at least $(u_1(x)+1)$ many $\upstep$-steps touching the line $y=1$. One can argue along very similar lines to show that $g'$ and $h$ are also well-defined.

Recall from Section~\ref{sec:results} that the function $f$ is defined for any lattice path $x\in L_{2k,k}\setminus D_{2k}^k$ as $f(x):=h(g(x))$.

Our first two lemmas assert that all the four mappings defined before are bijections, and identify the two pairs of mappings that are inverse to each other.

\begin{lemma}
\label{lem:g-inverse}
The mappings $g$ and $h'$ are bijections between the sets $L_{2k,k}\setminus D_{2k}^k$ and $L_{2k,k+1}$ and we have $g^{-1}=h'$.
\end{lemma}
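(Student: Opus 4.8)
The plan is to show that $h'$ is a two-sided inverse of $g$; since both maps are already known to be well-defined into the stated codomains (as noted above), this simultaneously establishes that they are bijections between $L_{2k,k}\setminus D_{2k}^k$ and $L_{2k,k+1}$. Concretely, I will prove $h'(g(x))=x$ for every $x\in L_{2k,k}\setminus D_{2k}^k$, and then either run the mirror-symmetric argument (interchanging the roles of $\upstep$/$\downstep$ and of the lines $y=0$/$y=1$) to get $g(h'(y))=y$ for every $y\in L_{2k,k+1}$, or observe that $|L_{2k,k}\setminus D_{2k}^k|=\binom{2k}{k}-C_k=\binom{2k}{k+1}=|L_{2k,k+1}|$ by Theorem~\ref{thm:chung-feller}, so that the single identity $h'(g(x))=x$ already forces $g$ to be a bijection and hence $h'=g^{-1}$. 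Either way the real work is the identity $h'(g(x))=x$.

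To prove it, write $p:=\pos(g,x)$ for the position of the single step that $g$ flips, so $x':=g(x)$ agrees with $x$ except that $x_p=\downstep$ has become $x'_p=\upstep$. The central claim is $p=\pos(h',x')$, i.e.\ that the step $h'$ flips in $x'$ is exactly the step $g$ just created; since $h'$ replaces that $\upstep$-step by a $\downstep$-step, this immediately yields $h'(g(x))=x$. First I record two easy height facts: the prefixes $(x_1,\ldots,x_{p-1})$ and $(x'_1,\ldots,x'_{p-1})$ have identical height profiles, while all heights at positions $\ge p$ increase by exactly $2$ when passing from $x$ to $x'$. In particular, since the flipped $\downstep$-step touches $y=0$ (so it starts at height $0$ or $1$), the new $\upstep$-step at position $p$ starts at the same height and therefore touches $y=1$ in $x'$; thus $p$ is a legitimate candidate for $h'$, and it remains to pin down its rank.

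The key step is to count, in $x'$, the $\upstep$-steps touching $y=1$ that lie strictly before position $p$ and to show this number equals $u_1(x')$; the step at $p$ is then precisely the $(u_1(x')+1)$-th such step, the one flipped by $h'$. Splitting $x'$ at $p$ and translating everything back to $x$ via the height relations, this reduces to the single identity
\[
  \#\{\upstep\text{-steps }0\!\to\!1\text{ in }x\text{ at positions }<p\}
  = \epsilon + \#\{\upstep\text{-steps }(-1)\!\to\!0\text{ in }x\text{ at positions }>p\},
\]
where $\epsilon=1$ if the flipped $\downstep$-step started at height $1$ and $\epsilon=0$ if it started at height $0$. To verify this I use the decomposition of $x$ into maximal arches between consecutive visits to the line $y=0$: each positive arch contributes exactly one $\downstep$-step touching $y=0$ (its last step) and each negative arch contributes exactly one (its first step), and ordering these touching $\downstep$-steps by position coincides with ordering the arches. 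Hence $d_0(x)$ equals the number of negative arches, and the step flipped by $g$ — the $(d_0(x)+1)$-th touching $\downstep$-step — is the distinguished step of the $(d_0(x)+1)$-th arch $A^{*}$ (which exists precisely because $x\notin D_{2k}^k$ guarantees a positive arch).

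The main obstacle is the arch bookkeeping in this identity, carried out by a short case analysis on $A^{*}$: if $A^{*}$ is positive then $p$ is its last step, $\epsilon=1$, and both sides equal the number of positive arches among the first $d_0(x)+1$ arches; if $A^{*}$ is negative then $p$ is its first step, $\epsilon=0$, and both sides equal the number of positive arches among the first $d_0(x)$ arches. This is exactly where the choice of rank $d_0(x)+1$ in the definition of $g$ (rather than $d_0(x)$, which defines $g'$) is matched to the rank $u_1(x')+1$ used by $h'$. With the displayed identity in hand we get $p=\pos(h',x')$ and hence $h'(g(x))=x$; combined with the symmetric identity (or the cardinality count above), $g$ and $h'$ are mutually inverse bijections between $L_{2k,k}\setminus D_{2k}^k$ and $L_{2k,k+1}$, as claimed.
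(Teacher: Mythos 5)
Your proof is correct, and its skeleton is the same as the paper's: reduce everything to the single claim $\pos(h',g(x))=\pos(g,x)$, i.e.\ that the $\upstep$-step created by $g$ at position $p$ is exactly the $(u_1(x')+1)$-th $\upstep$-step of $x'$ touching the line $y=1$, and then conclude bijectivity from the cardinality count $|L_{2k,k}\setminus D_{2k}^k|=\binom{2k}{k+1}=|L_{2k,k+1}|$. Where you genuinely diverge is in how the key counting identity is verified. The paper splits $x$ at position $p$ into a prefix and a suffix and works entirely with the quantities $u_c,d_c$ of these two pieces, using the balance identities $u_0(x_\ell)=d_1(x_\ell)$ and $u_{-1}(x_r)=d_0(x_r)+1$ (with a case distinction on whether the flipped $\downstep$-step starts on $y=0$ or $y=1$, exactly your $\epsilon$). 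You instead decompose $x$ into maximal arches between consecutive returns to $y=0$, observe that each arch carries exactly one $\downstep$-step touching $y=0$ and exactly one of the relevant $\upstep$-steps, and match ranks arch by arch; your two cases and the bookkeeping in them check out, as does the existence of the $(d_0(x)+1)$-th arch from $x\notin D_{2k}^k$. Your version is arguably more transparent about \emph{why} the rank $d_0(x)+1$ on the $g$-side pairs with the rank $u_1(x')+1$ on the $h'$-side; the paper's equation chain is less visual but pays off in Lemma~\ref{lem:h-inverse}, where the entire proof of $h^{-1}=g'$ is obtained by adding $1$ to one equation and propagating the change, whereas your arch analysis would have to be redone from scratch for that lemma.
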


\begin{proof}
Consider the lattice path $x\in L_{2k,k}\setminus D^k_{2k}$, its image $x':=g(x)$ and the position $i:=\pos(g,x)$ where they differ.
We let $x_\ell$ and $x_r$, respectively, denote the subpaths of $x$ strictly to the left and to the right of position $i$. Similarly, we let $x_\ell'$ and $x_r'$, respectively, denote the subpaths of $x'$ strictly to the left and to the right of position $i$.
We clearly have
\begin{equation}
\label{eq:xell}
  x_\ell'=x_\ell \enspace,
\end{equation}
and as the lattice path $x_r'$ is obtained by shifting $x_r$ up by two units we have
\begin{equation}
\label{eq:u1xr}
  u_1(x_r')=u_{-1}(x_r) \enspace.
\end{equation}
By the definition of $g$ we have
\begin{equation}
\label{eq:d0xlr}
  d_0(x)=d_0(x_\ell)+d_1(x_\ell) \enspace.
\end{equation}
We distinguish the cases whether the $\downstep$-step at position $i$ of $x$ starts on the line $y=0$ or on the line $y=1$.
We first consider the case that this $\downstep$-step starts on the line $y=0$.
In this case we have
\begin{align}
  d_0(x)  &= d_0(x_\ell)+d_0(x_r)+1 \enspace, \label{eq:d0x} \\
  u_1(x') &= u_1(x_\ell')+u_1(x_r') \enspace. \label{eq:u1xp}
\end{align}
It is also easy to see that
\begin{align}
  u_0(x_\ell) &= d_1(x_\ell) \enspace, \label{eq:u0xell} \\
  u_{-1}(x_r) &= d_0(x_r)+1 \enspace. \label{eq:um1xr}
\end{align}
Combining our previous observations we obtain
\begin{equation}
\label{eq:u1xrp}
  u_1(x_r') \eqByM{\eqref{eq:u1xr},\eqref{eq:um1xr}} d_0(x_r)+1 \eqByM{\eqref{eq:d0xlr},\eqref{eq:d0x}} d_1(x_\ell) \eqByM{\eqref{eq:u0xell},\eqref{eq:xell}} u_0(x_\ell')
\end{equation}
and therefore
\begin{equation}
\label{eq:u1xp-result}
  u_1(x') \eqByM{\eqref{eq:u1xp},\eqref{eq:u1xrp}} u_1(x_\ell')+u_0(x_\ell') \enspace.
\end{equation}
The reasoning in the case where the $\downstep$-step at position $i$ of $x$ starts on the line $y=1$ can be obtained by straightforward modifications of the above equations. Specifically, we have to subtract 1 on the right-hand sides of \eqref{eq:d0x} and \eqref{eq:um1xr}, and add 1 on the right-hand sides of \eqref{eq:u1xp} and \eqref{eq:u0xell}, yielding the same resulting relation \eqref{eq:u1xp-result}.
We obtain from \eqref{eq:u1xp-result} that the position of the $(u_1(x')+1)$-th $\upstep$-step of $x'$ touching the line $y=1$ equals the position $\pos(g,x)=i$ of the $\downstep$-step in $x$ that is flipped by $g$. Using the definition of $h'$, it follows that $g^{-1}=h'$. As both sets $L_{2k,k}\setminus D_{2k}^k$ and $L_{2k,k+1}$ have the same size $\binom{2k}{k}-\frac{1}{k+1}\binom{2k}{k}=\binom{2k}{k+1}$, the mappings $g$ and $h'$ are indeed bijections.
\end{proof}

\begin{lemma}
\label{lem:h-inverse}
The mappings $h$ and $g'$ are bijections between the sets $L_{2k,k+1}$ and $L_{2k,k}\setminus D_{2k}^0$ and we have $h^{-1}=g'$.
\end{lemma}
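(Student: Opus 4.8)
The plan is to deduce Lemma~\ref{lem:h-inverse} from Lemma~\ref{lem:g-inverse} by exploiting a reversal symmetry, rather than redoing the calculation of the previous proof from scratch. Let $\rho$ denote the involution that reverses the order of the steps of a lattice path (keeping each step's type, so an $\upstep$-step stays an $\upstep$-step). Reversing a path amounts to a point reflection: on $L_{2k,k}$ it is the reflection through $(k,0)$, and on $L_{2k,k+1}$ it is the reflection through $(k,1)$. In particular $\rho$ preserves the number of $\upstep$-steps, so it maps $L_{2k,k}$ and $L_{2k,k+1}$ to themselves, and it interchanges the sets $D_{2k}^0$ and $D_{2k}^k$ (a path staying weakly above $y=0$ is sent to one staying weakly below). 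Hence $\rho$ maps $L_{2k,k}\setminus D_{2k}^k$ onto $L_{2k,k}\setminus D_{2k}^0$, which is exactly the interchange of domains relating $g$ to $g'$ and $h$ to $h'$.

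First I would establish the two conjugation identities $\rho\circ g=g'\circ\rho$ and $\rho\circ h=h'\circ\rho$. The key observation is that $\rho$ sends a $\downstep$-step of $x$ \emph{starting} on $y=0$ to a $\downstep$-step of $\rho(x)$ \emph{ending} on $y=0$ and vice versa, reversing their left-to-right order; thus among the $D:=d_0(x)+d_1(x)$ many $\downstep$-steps touching $y=0$, the $(d_0(x)+1)$-th from the left in $x$ becomes the $(D-d_0(x))$-th $=d_1(x)$-th $=d_0(\rho(x))$-th from the left in $\rho(x)$. Since $g$ flips the former and $g'$ flips exactly the $d_0(\rho(x))$-th such step, both mappings flip \emph{corresponding} steps, and because $\rho$ merely permutes coordinates, flipping a step commutes with $\rho$; this gives $\rho\circ g=g'\circ\rho$. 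An entirely analogous count for $\upstep$-steps touching $y=1$ (with $u_1(\rho(x))=u_0(x)$) yields $\rho\circ h=h'\circ\rho$.

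Finally I would assemble these algebraically. Rewriting the identities as $g=\rho\circ g'\circ\rho$ and $h'=\rho\circ h\circ\rho$ and substituting into the relation $h'\circ g=\mathrm{id}$ from Lemma~\ref{lem:g-inverse}, the inner $\rho\circ\rho=\mathrm{id}$ collapses and we obtain $\rho\circ(h\circ g')\circ\rho=\mathrm{id}$, hence $h\circ g'=\mathrm{id}$ on $L_{2k,k}\setminus D_{2k}^0$. As both $L_{2k,k+1}$ and $L_{2k,k}\setminus D_{2k}^0$ have the same finite cardinality $\binom{2k}{k+1}$ (already noted in the proof of Lemma~\ref{lem:g-inverse}), a one-sided inverse is automatically two-sided, so $h$ and $g'$ are mutually inverse bijections and $h^{-1}=g'$. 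I expect the main obstacle to be the index bookkeeping in the conjugation step: one must carefully verify that reversal interchanges ``starts on $y=0$'' with ``ends on $y=0$'' (respectively for $y=1$) and track how this shifts the flip index, since this is precisely the place where the off-by-one offsets distinguishing $g$ from $g'$ and $h$ from $h'$ come into play. As a fallback, one can instead mirror the explicit computation of Lemma~\ref{lem:g-inverse} verbatim, replacing $d_0,u_1$ and the line $y=0$ by their counterparts $u_1,d_0$ and the line $y=1$ and tracking the right-hand subpath $x_r'$, which reproduces the analogue of \eqref{eq:u1xp-result} and shows directly that $g'(h(x))=x$.
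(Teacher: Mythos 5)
Your proof is correct, but it takes a genuinely different route from the paper's. The paper proves Lemma~\ref{lem:h-inverse} by rerunning the left-subpath counting argument of Lemma~\ref{lem:g-inverse} with an index shift: since $g'$ flips the $d_0(x)$-th rather than the $(d_0(x)+1)$-th $\downstep$-step, equation \eqref{eq:d0xlr} picks up a $+1$, the analogue of \eqref{eq:u1xp-result} becomes $u_1(x')=u_1(x_\ell')+u_0(x_\ell')+1$, and the definition of $h$ then identifies the flipped positions. You instead conjugate by the step-reversal involution $\rho$, and your bookkeeping is right at the one place where it matters: on $L_{2k,k}$ (resp.\ $L_{2k,k+1}$) reversal is the point reflection through $(k,0)$ (resp.\ $(k,1)$), it swaps $D_{2k}^0$ with $D_{2k}^k$, and it exchanges $\downstep$-steps starting on $y=0$ with those ending on $y=0$ while reversing their left-to-right order, so the $(d_0(x)+1)$-th of the $d_0(x)+d_1(x)$ many $\downstep$-steps touching $y=0$ indeed lands on the $d_1(x)$-th $=d_0(\rho(x))$-th, giving $\rho\circ g=g'\circ\rho$; the analogous count $u_1(\rho(x))=u_0(x)$ gives $\rho\circ h=h'\circ\rho$. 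The algebraic step deducing $h\circ g'=\mathrm{id}$ from $h'\circ g=\mathrm{id}$ is sound, and the cardinality argument upgrading the one-sided inverse is the same one the paper uses. (You could even skip that last step: $g'=\rho\circ g\circ\rho$ and $h=\rho\circ h'\circ\rho$ are bijections as compositions of bijections, and both $h\circ g'=\mathrm{id}$ and $g'\circ h=\mathrm{id}$ follow directly from the two identities $h'\circ g=\mathrm{id}$ and $g\circ h'=\mathrm{id}$ of Lemma~\ref{lem:g-inverse}.) What your approach buys is that Lemma~\ref{lem:h-inverse} becomes a formal consequence of Lemma~\ref{lem:g-inverse} plus a symmetry of the whole setup, with no second computation and with the off-by-one offsets between $g,g'$ and $h,h'$ explained conceptually; what the paper's version buys is brevity on the page, since it is a three-line delta on a proof already written out in full.
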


\begin{proof}
This proof is obtained by small modifications to the proof of Lemma~\ref{lem:g-inverse}.
As $x':=g'(x)$ is obtained by flipping the $d_0(x)$-th $\downstep$-step of $x$, we have to add $1$ on the right-hand side of \eqref{eq:d0xlr}.
Equations \eqref{eq:xell}, \eqref{eq:u1xr}, \eqref{eq:d0x}--\eqref{eq:um1xr} remain the same in their respective cases so that the resulting equation \eqref{eq:u1xp-result} changes to
\begin{equation*}
  u_1(x')=u_1(x_{\ell}')+u_0(x_{\ell}')+1 \enspace.
\end{equation*}
We conclude from this relation that the position of the $u_1(x')$-th $\upstep$-step of $x'$ touching the line $y=1$ equals the position $\pos(g',x)$ of the $\downstep$-step in $x$ that is flipped by $g'$. Using the definition of $h$, it follows that $h^{-1}=g'$. As both sets $L_{2k,k+1}$ and $L_{2k,k}\setminus D_{2k}^0$ have the same size, we obtain that $h$ and $g'$ are indeed bijections.
\end{proof}

The next two lemmas show that the two positions that are flipped when applying $g$ and $h$ consecutively (in one of the two orders) are as close to each other as possible.

\begin{lemma}
\label{lem:01-positions}
For any lattice path $x\in L_{2k,k}\setminus D_{2k}^k$ we have $\pos(h,g(x))<\pos(g,x)$ and in $x$ and $g(x)$ there is no $\upstep$-step touching the line $y=1$ at any position $i$ with $\pos(h,g(x))<i<\pos(g,x)$.
\end{lemma}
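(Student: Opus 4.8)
The plan is to read off both claims directly from the key identity \eqref{eq:u1xp-result} established in the proof of Lemma~\ref{lem:g-inverse}. Write $x' := g(x)$ and $i := \pos(g,x)$, and let $x_\ell'$ be the subpath of $x'$ strictly to the left of position $i$, exactly as in that proof. Recall that \eqref{eq:u1xp-result} states $u_1(x') = u_1(x_\ell') + u_0(x_\ell')$.

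First I would note that the right-hand side of this identity counts precisely the $\upstep$-steps of $x_\ell'$ that touch the line $y=1$: an $\upstep$-step touches $y=1$ exactly when it starts on $y=1$ (contributing to $u_1(x_\ell')$) or starts on $y=0$ and hence ends on $y=1$ (contributing to $u_0(x_\ell')$). Therefore the number of $\upstep$-steps of $x'$ touching $y=1$ at positions strictly less than $i$ equals $u_1(x')$. Since $h$ flips the $u_1(x')$-th $\upstep$-step of $x'$ touching $y=1$, and all $u_1(x')$ of these steps already occur within $x_\ell'$, the flipped step lies at a position strictly less than $i$; this yields $\pos(h,g(x)) < \pos(g,x)$. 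Moreover $\pos(h,g(x))$ is the position of the \emph{last} $\upstep$-step touching $y=1$ inside $x_\ell'$, so in $x'$ there is no $\upstep$-step touching $y=1$ at any position strictly between $\pos(h,g(x))$ and $i$.

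It then remains to transfer this last statement from $x'$ to $x$, for which I would invoke \eqref{eq:xell}, namely $x_\ell' = x_\ell$. This says that $x$ and $x'$ agree as step sequences on all positions $<i$, and since both start at height $0$ they also agree in all intermediate heights there. Hence for any position $i'$ with $\pos(h,g(x)) < i' < i$, the step at position $i'$ together with its height is identical in $x$ and $x'$, so it touches $y=1$ in $x$ if and only if it does in $x'$; as it does not in $x'$, it does not in $x$ either. I expect the only point requiring care to be the bookkeeping of the ``touching'' convention --- that an $\upstep$-step touches $y=1$ exactly when it starts on $y=0$ or on $y=1$ --- rather than any genuine obstacle, since the substantive work is already contained in \eqref{eq:u1xp-result}.
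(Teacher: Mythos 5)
Your proof is correct and follows essentially the same route as the paper's: the paper simply cites Lemma~\ref{lem:g-inverse} (i.e.\ $g^{-1}=h'$) to conclude that the $(u_1(x')+1)$-th $\upstep$-step of $x'$ touching $y=1$ sits at position $\pos(g,x)$ and that $h$ flips the immediately preceding such step, whereas you re-derive that same fact directly from \eqref{eq:u1xp-result}. The only difference is one of packaging; the transfer from $x'$ back to $x$ via \eqref{eq:xell}, which you spell out, is left implicit in the paper.
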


\begin{proof}
Let $x\in L_{2k,k}\setminus D_{2k}^k$ and $x':=g(x)$.
By Lemma~\ref{lem:g-inverse} we know that $h'$ is the inverse mapping of $g$. This means that the $(u_1(x')+1)$-th $\upstep$-step of $x'$ touching the line $y=1$ is at position $\pos(g,x)$. The mapping $h$ however already flips the $u_1(x')$-th $\upstep$-step of $x'$ touching the line $y=1$ (the previous one with that property), and the lemma follows.
\end{proof}

\begin{lemma}
\label{lem:10-positions}
For any lattice path $x\in L_{2k,k+1}$ we have $\pos(h,x)<\pos(g,h(x))$ and in $h(x)$ there is no $\downstep$-step touching the line $y=0$ at any position $i$ with $\pos(h,x)<i<\pos(g,h(x))$.
\end{lemma}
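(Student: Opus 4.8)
The plan is to obtain this lemma as the exact dual of Lemma~\ref{lem:01-positions}, with the roles of $\upstep$-/$\downstep$-steps and of the lines $y=1$/$y=0$ interchanged, using Lemma~\ref{lem:h-inverse} in place of Lemma~\ref{lem:g-inverse}. First I would fix $x\in L_{2k,k+1}$ and set $y:=h(x)$, so that $y\in L_{2k,k}\setminus D_{2k}^0$. For $\pos(g,h(x))$ to be meaningful we of course need $g$ to be applicable to $y$, i.e.\ $y\notin D_{2k}^k$; this is precisely the condition that the $(d_0(y)+1)$-th $\downstep$-step of $y$ touching the line $y=0$ exists, and I would note it at the outset. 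By Lemma~\ref{lem:h-inverse} the mapping $g'$ is the inverse of $h$, hence $g'(y)=x$. Since $h$ and $g'$ each flip a single step, the unique position in which $y$ and $x=g'(y)$ differ satisfies $\pos(g',y)=\pos(h,x)$.

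Next I would recall the defining difference between $g'$ and $g$: the mapping $g'$ flips the $d_0(y)$-th $\downstep$-step of $y$ touching the line $y=0$, whereas $g$ flips the $(d_0(y)+1)$-th such $\downstep$-step. Thus $\pos(g',y)$ and $\pos(g,y)$ are the positions of two \emph{consecutive} $\downstep$-steps of $y$ touching the line $y=0$ (the $d_0(y)$-th and the $(d_0(y)+1)$-th). In particular $\pos(g,y)>\pos(g',y)$, and there is no $\downstep$-step of $y$ touching the line $y=0$ at any position strictly between them.

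Combining these two observations via $\pos(g',y)=\pos(h,x)$ and $\pos(g,y)=\pos(g,h(x))$ gives $\pos(h,x)<\pos(g,h(x))$ together with the absence of any $\downstep$-step of $h(x)$ touching the line $y=0$ strictly in between, which is exactly the claim. I do not expect a substantial obstacle here, since the structure is identical to that of Lemma~\ref{lem:01-positions}: there, $h'=g^{-1}$ flips the $(u_1+1)$-th $\upstep$-step touching $y=1$ while $h$ flips the $u_1$-th, making $\pos(h,g(x))<\pos(g,x)$; here, $g'=h^{-1}$ flips the $d_0$-th $\downstep$-step touching $y=0$ while $g$ flips the $(d_0+1)$-th, making $\pos(h,x)<\pos(g,h(x))$. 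The only point genuinely requiring care is the well-definedness noted above, namely that we restrict to those $x$ for which $g$ is applicable to $h(x)$; this is the same kind of well-definedness discussion that precedes Lemma~\ref{lem:g-inverse}.
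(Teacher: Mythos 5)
Your argument is correct and is essentially identical to the paper's proof: both invoke Lemma~\ref{lem:h-inverse} to identify $\pos(h,x)$ with the position of the $d_0(h(x))$-th $\downstep$-step of $h(x)$ touching the line $y=0$, and then observe that $g$ flips the very next such $\downstep$-step. Your additional remark about restricting to those $x$ for which $g$ is applicable to $h(x)$ (i.e.\ $h(x)\notin D_{2k}^k$) is a reasonable point of care that the paper leaves implicit.
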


\begin{proof}
Let $x\in L_{2k,k+1}$ and $x':=h(x)$.
By Lemma~\ref{lem:h-inverse} we know that $g'$ is the inverse mapping of $h$. This means that the $d_0(x')$-th $\downstep$-step of $x'$ touching the line $y=0$ is at position $\pos(h,x)$. The mapping $g$ however flips the $(d_0(x')+1)$-th $\downstep$-step of $x'$ touching the line $y=0$ (the next one with that property), and the lemma follows.
\end{proof}

For any Dyck path $x\in D_{2k}^e$, $e<k$, let $b:=\pos(g,x)$ and $a:=\pos(h,g(x))$, respectively, be the positions of the $\downstep$-step and $\upstep$-step that are flipped when applying $f$. We refer to the subpath of $x$ strictly to the left of position $a$ as the \emph{initial part}, to the subpath strictly between positions $a$ and $b$ as the \emph{middle part}, and to the subpath strictly to the right of position $b$ as the \emph{terminal part}.
As mentioned before, $f(x)$ is obtained from $x$ by flipping the steps at positions $a$ and $b$, leaving the initial and terminal part of $x$ unchanged, and shifting down the middle part down by two units. In particular, no step of $x$ is shifted up by $f$.

The next lemma shows that our minimum-change bijection increases the number of flaws by exactly~1.

\begin{lemma}
\label{lem:flaws++}
For any $x\in D_{2k}^e$, $e<k$, we have that $f(x)\in D_{2k}^{e+1}$. 
\end{lemma}

\begin{proof}
Each Dyck path with $e$ flaws has exactly $e$ many $\downstep$-steps and exactly $e$ many $\upstep$-steps below the line $y=0$.
To prove the lemma it therefore suffices to show that for each $x\in D_{2k}^e$, the image $f(x)$ has exactly $e+1$ many $\upstep$-steps below the line $y=0$.

First note that any $\upstep$-steps of $x$ below the line $y=0$ is also below the line $y=0$ in $f(x)$: This is because the $\upstep$-step of $x$ flipped by $f$ (it touches the line $y=1$ by definition) lies above the line $y=0$ (in $x$), and because $f$ does not shift up any steps. In contrast to that, the $\downstep$-step of $x$ that is flipped by $f$ (it touches the line $y=0$ by definition) creates an additional $\upstep$-step of $f(x)$ below the line $y=0$.
It remains to argue that shifting down the middle part of $x$ does not create additional $\upstep$-steps below the line $y=0$.
To see this note that such an $\upstep$-step in the middle part of $x$ would touch the line $y=1$ (in $x$ before the shift), which is impossible because of Lemma~\ref{lem:01-positions}. This completes the proof.
\end{proof}

The following lemma is illustrated in Figure~\ref{fig:algo} below. It allows us to recognize for a given Dyck path $x\in D_{2k}^e$ the corresponding preimage $x^0\in D_{2k}^0$ such that $f^e(x^0)=x$, and will be used later in our algorithm that computes repeated applications of $f$ efficiently.

\begin{lemma}
\label{lem:recover-x0}
For any $x\in D_{2k}^e$, let $U_x$ be the set of all positions of $\upstep$-steps of $x$ that lie below the line $y=0$, and let $D_x$ be the set of all positions of $\downstep$-steps that lie below the line $y=-1$ or that are among the first $d_0(x)$ many $\downstep$-steps touching the line $y=0$. Moreover, let $x^0\in D_{2k}^0$ be such that $f^e(x^0)=x$. Then the Dyck paths $x^0$ and $x$ differ exactly in the positions $U_x\cup D_x$ (in particular, $|U_x|=|D_x|=e$).
\end{lemma}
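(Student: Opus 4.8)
The plan is to track the two positions that each of the $e$ applications of $f$ carrying $x^0$ into $x$ flips, and to match the resulting $e$ ``up-flip'' positions with $U_x$ and the $e$ ``down-flip'' positions with $D_x$. Write $y_0:=x^0,\ y_1:=f(y_0),\ldots,y_e:=f(y_{e-1})=x$, and for $j=1,\ldots,e$ let $a_j:=\pos(h,g(y_{j-1}))$ and $b_j:=\pos(g,y_{j-1})$ be the two positions flipped when passing from $y_{j-1}$ to $y_j$; by Lemma~\ref{lem:01-positions} we have $a_j<b_j$, the step at $a_j$ is flipped from $\upstep$ to $\downstep$, and the step at $b_j$ from $\downstep$ to $\upstep$. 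Two observations make the bookkeeping of cardinalities trivial. First, $U_x\cap D_x=\emptyset$, since $U_x$ collects positions carrying $\upstep$-steps of $x$ and $D_x$ positions carrying $\downstep$-steps of $x$. Second, $|U_x|=|D_x|=e$: the $e$ flaws of $x$ split according to their starting height into the $d_0(x)$ many $\downstep$-steps starting on $y=0$ and the remaining ones starting strictly below $y=0$ (which are precisely the $\downstep$-steps below $y=-1$), and since a $\downstep$-step below $y=-1$ cannot touch $y=0$ these two groups are disjoint and together form $D_x$, while $x$ trivially has exactly $e$ many $\upstep$-steps below $y=0$. Hence it suffices to show that $\{b_1,\ldots,b_e\}=U_x$ and $\{a_1,\ldots,a_e\}=D_x$, with all $2e$ of these positions pairwise distinct; then each is flipped exactly once and $x$ and $x^0$ differ in exactly $U_x\cup D_x$.

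For the up-flips I would argue exactly as in the proof of Lemma~\ref{lem:flaws++}. There it is shown that applying $f$ creates precisely one new $\upstep$-step below $y=0$, namely the one at the flipped position $b_j$, while every $\upstep$-step already below $y=0$ remains an $\upstep$-step below $y=0$ (no step is shifted up, and the $\upstep$-step flipped at $a_j$ touches $y=1$ and hence lies above $y=0$). Thus the set of positions of $\upstep$-steps below $y=0$ grows by exactly the single element $b_j$ at each step; starting from the empty set for $y_0$ we obtain that it equals $\{b_1,\ldots,b_j\}$ for $y_j$, so the $b_j$ are pairwise distinct and $U_x=\{b_1,\ldots,b_e\}$.

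The down-flips are the crux. Here I would introduce the analogous quantity $D^*(z):=\{\text{positions of }\downstep\text{-steps of }z\text{ below }y=-1\}\cup\{\text{first }d_0(z)\text{ many }\downstep\text{-steps of }z\text{ touching }y=0\}$, so that $D^*(x)=D_x$ and $D^*(x^0)=\emptyset$, and prove the monotonicity $D^*(y_j)=D^*(y_{j-1})\cup\{a_j\}$ with $a_j\notin D^*(y_{j-1})$. The new $\downstep$-step at $a_j$ starts at a height $H\in\{0,1\}$ (in $y_{j-1}$ it was an $\upstep$-step touching $y=1$, with the unchanged initial part to its left), so it touches $y=0$, and one checks that it becomes one of the first $d_0(y_j)$ such steps and so enters $D^*$. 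Conversely, the $\downstep$-step removed at $b_j$ is by definition of $g$ the $(d_0(y_{j-1})+1)$-th $\downstep$-step touching $y=0$, hence was never counted in $D^*(y_{j-1})$. Finally one must verify that shifting the middle part down by two units leaves $D^*$-membership of the steps it contains unchanged: a deep $\downstep$-step stays deep, and no middle $\downstep$-step touching $y=0$ can interfere, which is exactly the content of the locality Lemmas~\ref{lem:01-positions} and~\ref{lem:10-positions} constraining the structure between positions $a_j$ and $b_j$. Granting this, $D_x=\{a_1,\ldots,a_e\}$ with the $a_j$ distinct.

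Combining the two sides, the positions $a_1,\ldots,a_e$ carry $\downstep$-steps of $x$ and $b_1,\ldots,b_e$ carry $\upstep$-steps of $x$, so all $2e$ are pairwise distinct and $x$ and $x^0$ differ in exactly $D_x\cup U_x$, as claimed. I expect the main obstacle to be the monotonicity of $D^*$ in the third paragraph: unlike the property ``$\upstep$-step below $y=0$'', membership in $D^*$ is not manifestly preserved by the downward shift, and pinning it down requires a case analysis according to the height $H\in\{0,1\}$ at which the flipped steps sit together with a re-indexing argument for the ``first $d_0$'' steps, both controlled by Lemmas~\ref{lem:01-positions} and~\ref{lem:10-positions}. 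An essentially equivalent route is to induct on $e$ by peeling off one application of $f^{-1}=h'\circ g'$ (via Lemmas~\ref{lem:g-inverse} and~\ref{lem:h-inverse}); this replaces the forward monotonicity of $D^*$ by the statement that $g'$ removes a single element from $D_x$ and $h'$ a single element from $U_x$, leaving the same technical core.
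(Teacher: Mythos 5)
Your proof is correct and follows essentially the same route as the paper's: the paper likewise argues by induction on the number of flaws, establishing the increments $U_{f(x)}=U_x\cup\{b\}$ via the argument of Lemma~\ref{lem:flaws++} and $D_{f(x)}=D_x\cup\{a\}$ via the fact (from Lemma~\ref{lem:h-inverse} and the definition of $g'$) that the newly created $\downstep$-step at position $a$ is exactly the $d_0(f(x))$-th $\downstep$-step of $f(x)$ touching the line $y=0$. The case analysis you defer with ``one checks'' and ``granting this'' is precisely the body of the paper's induction step, and it goes through as you predict.
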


\begin{proof}
We argue by induction over the number of flaws.
The lemma clearly holds for any $x\in D_{2k}^0$ (in this case $U_x=D_x=\emptyset$), settling the induction basis.
For the induction step we fix some $x\in D_{2k}^e$, $e<k$, for which the statement holds, and show that it also holds for $x':=f(x)$.

Let $x^0\in D_{2k}^0$ be such that $f^e(x^0)=x$.
Moreover, let $a$ and $b$, respectively, be the positions of the $\upstep$-step and $\downstep$-step of $x=(x_1,x_2,\ldots,x_{2k})$ that are flipped by $f$. By the definition of $f$ we know that $x_b$ is the $(d_0(x)+1)$-th $\downstep$-step of $x$ touching the line $y=0$, and hence $b\notin D_x$, i.e., both $x$ and $x^0$ have a $\downstep$-step at position $b$.
As argued in the proof of Lemma~\ref{lem:flaws++}, when applying $f$ to $x$ the downstep $x_b$ of $x$ is the only additional $\upstep$-step of $x'$ below the line $y=0$ (all others remain the same), implying that $U_{x'}=U_x\cup \{b\}$.
By induction we know that $U_x$ contains the positions of all $\upstep$-steps of $x$ where $x^0$ has a $\downstep$-step. Using that both $x$ and $x^0$ have a $\downstep$-step at position $b$, it follows that $U_{x'}$ contains the positions of all $\upstep$-steps of $x'$ where $x^0$ has a $\downstep$-step.

We proceed to show that $D_{x'}=D_x\cup \{a\}$ by showing both directions of this inclusion.
We begin by showing that $D_x\cup\{a\}\seq D_{x'}$.
Fix an element $c\in D_x$. If the $\downstep$-step $x_c$ at position $c$ of $x$ lies below the line $y=-1$, then it is not flipped by $f$, and also not shifted up, implying that $c\in D_{x'}$ in this case.
Otherwise, the $\downstep$-step $x_c$ is among the first $d_0(x)$ many $\downstep$-steps of $x$ touching the line $y=0$.
If $x_c$ belongs to the middle part of $x$, then it is shifted down by two units when applying $f$, so the corresponding $\downstep$-step $x'_c$ lies below the line $y=-1$, yielding $c\in D_{x'}$.
It remains to consider the case that $x_c$ belongs to the initial part of $x$.
By Lemma~\ref{lem:h-inverse} and the definition of $g'$, the $\downstep$-step $x'_a$ (which is an $\upstep$-step in $x$ that is flipped by $f$) is the $d_0(x')$-th $\downstep$-step of $x'$ touching the line $y=0$ (*), implying that the $\downstep$-step $x'_c$ is among the first $d_0(x')$ many $\downstep$-steps of $x'$, i.e., we have $c\in D_{x'}$. 
From (*) we also conclude that $a\in D_{x'}$.
We therefore obtain $D_x\cup\{a\}\subseteq D_{x'}$, as desired.
 
We now show that $D_{x'}\seq D_x\cup\{a\}$.
Let $c\in D_{x'}\setminus\{a\}$. If the corresponding $\downstep$-step $x'_c$ lies below the line $y=-1$, then the corresponding $\downstep$-step $x_c$ either also lies below the line $y=-1$, in which case we clearly have $c\in D_x$, or the $\downstep$-step $x_c$ belongs to the middle part of $x$ that is shifted down when applying $f$ and touches the line $y=0$ in $x$. By the definition of $f$ we have $c\in D_x$ also in this case.
We now consider the case that the $\downstep$-step $x'_c$ lies above the line $y=-1$ in $x'$.
Then $x'_c$ belongs to the first $d_0(x')$ many $\downstep$-steps of $x'$ touching the line $y=0$.
Using Lemma~\ref{lem:h-inverse} and the definition of $g'$, it follows that the corresponding $\downstep$-step $x_c$ belongs to the initial part of $x$ and is therefore among the first $d_0(x)$ many $\downstep$-steps of $x$ touching the line $y=0$. We therefore have $c\in D_x$ in this case as well.
Altogether, we conclude that $D_{x'}=D_x\cup \{a\}$, as claimed.
 
It remains to verify that $D_{x'}$ contains the positions of all $\downstep$-steps of $x'$ where $x^0$ has an $\upstep$-step.
By induction we know that $D_x$ contains the positions of all $\downstep$-steps of $x$ where $x^0$ has an $\upstep$-step.
As applying $f$ introduces only one additional $\downstep$-step at position $a$, we only need to check that $x^0$ has an $\upstep$-step at position $a$. Recall that the $\upstep$-step $x_a$ of $x$ touches the line $y=1$. It follows that $a\notin U_x$, and so we obtain by induction that $x^0$ indeed has an $\upstep$-step at position $a$.
This completes the proof.
\end{proof}

We now consider the sequence of flipped positions when applying the minimum-change bijection $f$ repeatedly.
Specifically, for any $x\in D_{2k}^0$, we define $x^0:=x$ and $y^i:=g(x^i)$ and $x^{i+1}:=h(y^i)$ for $i=0,1,\ldots,k-1$ (by the definition of $f$ we have $x^i=f^i(x)$ for $i=0,1,\ldots,k$) and define the sequence $p(x):=(x^0,y^0,x^1,y^1,\ldots,x^{k-1},y^{k-1},x^k)$ and the corresponding sequence of flipped positions $\pi(x):=\big(\pos(g,x^0),\pos(h,y^0),\pos(g,x^1),\pos(h,y^1),\ldots,\pos(g,x^{k-1}),\pos(h,y^{k-1})\big)$.
We call a sequence of integers $(\alpha_1,\alpha_2,\ldots,\alpha_{2k})$ \emph{alternating} if we have $\alpha_{i-1}>\alpha_i$ for all $i\in\{2,4,\ldots,2k\}$ and $\alpha_{i-1}<\alpha_i$ for all $i\in\{3,5,\ldots,2k-1\}$.
For any lattice path $x\in L_{2k,k}$, we denote by $\ol{x}$ the lattice path obtained by mirroring $x$ at the line $y=0$ (note that $x\in D_{2k}^e$ if and only if $\ol{x}\in D_{2k}^{k-e}$).

The following lemma shows that when applying $f$ repeatedly to some Dyck path with zero flaws, then the order of flipped positions alternate and each step of the path will be flipped exactly once.

\begin{lemma}
\label{lem:altperm}
For any $x\in D_{2k}^0$, the sequence $\pi(x)$ is an alternating permutation of $\{1,2,\ldots,2k\}$. In particular, we have $f^k(x)=\ol{x}$.
\end{lemma}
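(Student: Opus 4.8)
The plan is to split the statement into three pieces and handle them in order: the alternating property of $\pi(x)$, the fact that $\pi(x)$ is a permutation of $\{1,2,\ldots,2k\}$, and finally the consequence $f^k(x)=\ol{x}$.

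First I would read off the alternating property directly from Lemmas~\ref{lem:01-positions} and~\ref{lem:10-positions}. Writing $\pi(x)=(\alpha_1,\ldots,\alpha_{2k})$, the odd-indexed entries $\alpha_{2i+1}=\pos(g,x^i)$ are the positions flipped by $g$ and the even-indexed entries $\alpha_{2i+2}=\pos(h,y^i)$ are those flipped by $h$. Applying Lemma~\ref{lem:01-positions} to $x^i$ gives $\pos(h,g(x^i))<\pos(g,x^i)$, i.e.\ $\alpha_{2i+2}<\alpha_{2i+1}$, which supplies exactly the descents $\alpha_{i-1}>\alpha_i$ required at the even indices. Applying Lemma~\ref{lem:10-positions} to $y^i=g(x^i)\in L_{2k,k+1}$ gives $\pos(h,y^i)<\pos(g,h(y^i))=\pos(g,x^{i+1})$, i.e.\ $\alpha_{2i+2}<\alpha_{2i+3}$, which supplies the ascents at the odd indices $\geq 3$. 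This makes the alternating property immediate.

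Next I would show that $\pi(x)$ is a permutation by a flip-counting argument anchored in Lemma~\ref{lem:recover-x0}. Each of the $k$ applications of $f$ flips exactly two steps, so $\pi(x)$ records $2k$ flips counted with multiplicity, and every $g$- or $h$-flip toggles the step direction ($\upstep\leftrightarrow\downstep$) at its position. Consequently a given position of $x^k=f^k(x)$ carries the step type opposite to the one it carries in $x$ precisely when it has been flipped an odd number of times. Now invoke Lemma~\ref{lem:recover-x0} with $e=k$ for $x^k\in D_{2k}^k$: the sets $U_{x^k}$ and $D_{x^k}$ are disjoint (they list positions of $\upstep$- and $\downstep$-steps, respectively) and each has size $k$, so $x$ and $x^k$ differ in $|U_{x^k}|+|D_{x^k}|=2k$ positions, that is, in \emph{all} of them. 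Hence all $2k$ positions are flipped an odd number of times; but the total number of flips is only $2k$, so each such position is flipped exactly once and none is flipped twice. Therefore the $2k$ entries of $\pi(x)$ are pairwise distinct and exhaust $\{1,2,\ldots,2k\}$.

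Finally, the ``in particular'' statement drops out: since $x$ and $x^k=f^k(x)$ differ in every position and each flip reverses the direction of a step, $x^k$ is obtained from $x$ by reversing the direction of every single step, which is exactly the reflection $\ol{x}$ across the line $y=0$ (consistent with $x\in D_{2k}^0$ forcing $\ol{x}\in D_{2k}^k$ and $f^k(x)\in D_{2k}^k$ by Lemma~\ref{lem:flaws++}). I expect the alternating part to be a one-line consequence of the two positional lemmas, and the reflection claim to follow immediately once the permutation property is in hand; the one genuine (if modest) obstacle is the permutation claim itself, whose crux is translating the structural description of the differing positions furnished by Lemma~\ref{lem:recover-x0} into the parity argument that forces all $2k$ flips to land on distinct positions.
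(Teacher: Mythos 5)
Your proposal is correct and follows essentially the same route as the paper: the paper likewise applies Lemma~\ref{lem:recover-x0} to $f^k(x)\in D_{2k}^k$ to conclude that $x$ and $f^k(x)$ differ in all $2k$ positions (giving $f^k(x)=\ol{x}$ and, by counting the $2k$ flips, the permutation property), and derives the alternating property from Lemmas~\ref{lem:01-positions} and~\ref{lem:10-positions}. Your write-up merely makes the parity/counting step more explicit than the paper's one-line version.
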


The alternating permutations $\pi(x)$, $x\in D_{2k}^0$, are shown at the bottom of Figure~\ref{fig:min-change-bij} for $k=3$. Note also that the Dyck paths in the bottom row of the figure (they have $k$ flaws) are mirror images of the Dyck paths in the top row (they have zero flaws).

\begin{proof}
Applying Lemma~\ref{lem:recover-x0} to $x':=f^k(x)\in D_{2k}^k$ shows that $x$ and $x'$ differ in all $2k$ positions (the sets $U_{x'}$ and $D_{x'}$ satisfy $U_{x'}\cup D_{x'}=\{1,2,\ldots,2k\}$), implying that $f^k(x)=\ol{x}$.
As applying $f^k$ to $x$ flips exactly $2k$ steps in total, we conclude that $\pi(x)$ is indeed a permutation of $\{1,2,\ldots,2k\}$. Moreover, using Lemma~\ref{lem:01-positions} and Lemma~\ref{lem:10-positions} we obtain that $\pi(x)$ is alternating.
\end{proof}

The next lemma allows us to compute the entire sequence $\pi(x)$ recursively directly from $x$, and will be used later in our algorithm that computes repeated applications of the minimum-change bijection $f$ efficiently.
To state the lemma, we define for any sequence $\alpha=(\alpha_1,\alpha_2,\ldots,\alpha_n)$ the reverse sequence $\rev(\alpha):=(\alpha_n,\alpha_{n-1},\ldots,\alpha_1)$ and we denote the length of the sequence by $|\alpha|:=n$.
Moreover, for any integer $c$ we define $\alpha+c=c+\alpha:=(\alpha_1+c,\alpha_2+c,\ldots,\alpha_n+c)$.
Given a Dyck path $x\in D_{2k}^e$, we may reverse and complement the sequence of steps of $x$, yielding the Dyck path $\ol{\rev}(x)\in D_{2k}^e$, which is obtained by mirroring $x$ at the vertical line with abscissa $k$.
Given a Dyck path $x\in D_{2k}^0$, let $b$ be the position of the first $\downstep$-step of $x=(x_1,x_2,\ldots,x_{2k})$ touching the line $y=0$, and define $u:=(x_2,\ldots,x_{b-1})$ and $v:=(x_{b+1},\ldots,x_{2k})$. We refer to this decomposition $x=\upstep\circ u\circ \downstep\circ v$ as the \emph{canonical decomposition} of $x$.

\begin{lemma}
\label{lem:altperm-rec}
Given $x\in D_{2k}^0$, let $x=\upstep\circ u\circ \downstep\circ v$ be the canonical decomposition of $x$. Then the sequence $\pi(x)$ satisfies the relation $\pi(x)=\big(|u|+2,|u|+2-\pi(\ol{\rev}(u)),1,|u|+2+\pi(v)\big)$.
\end{lemma}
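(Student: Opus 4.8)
The plan is to prove the recursion by strong induction on the length $2k$. Write $x=\upstep\circ u\circ\downstep\circ v$ with $|u|=2p$ and $|v|=2q$, so that $p+q+1=k$; I want to show that the $k$ applications of $f$ split into a \emph{first-arch phase} (the first $p+1$ applications) and a \emph{tail phase} (the remaining $q$ applications), producing the four advertised blocks. The very first flip is immediate: since $x\in D_{2k}^0$ we have $d_0(x)=0$, so $\pos(g,x)$ is the position of the first $\downstep$-step of $x$ touching the line $y=0$, which is the closing step of the first arch, namely $b:=|u|+2$. This accounts for the leading entry of $\pi(x)$.

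The two phases are separated by a \emph{forward decoupling} in the spirit of Lemmas~\ref{lem:flaws++} and~\ref{lem:recover-x0}. In the first-arch phase the subpath $v$ stays passive: within each application of $f$ the map $g$ flips a $\downstep$-step of $\upstep\circ u\circ\downstep$ (raising $v$ by two units) and $h$ flips an $\upstep$-step of it (lowering $v$ back), so the net effect on $v$ is nil. Indeed, as long as the position-$1$ step is still an $\upstep$-step the current first arch runs from height $0$ up and back to $0$ and hence contains a $\downstep$-step from $y=1$ to $y=0$; this forces the $(d_0+1)$-th $\downstep$-step touching $y=0$ to lie inside the arch, so $g$ acts there, and after $g$ the raised copy of $v$ sits at height $\geq 2$ and contributes nothing to $u_1$, so $h$ also acts inside the arch. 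Thus $g,h$ restrict to the standalone maps on $\upstep\circ u\circ\downstep$, and by Lemma~\ref{lem:altperm} the first arch reaches its mirror after exactly $p+1$ steps, i.e.\ $f^{p+1}(x)=\downstep\circ\ol u\circ\upstep\circ v$. The tail phase is genuinely easy and symmetric: the passive prefix $\downstep\circ\ol u\circ\upstep$ lies at heights $\leq 0$ and contributes exactly one $\downstep$-step touching $y=0$ (at position $1$) and no $\upstep$-step touching $y=1$, so $g$ and $h$ act on the $v$-block precisely as the standalone maps on the current image of $v$, shifted by $b$. Hence the tail flips are $b+\pi(v)$ and the path ends at $\downstep\circ\ol u\circ\upstep\circ\ol v=\ol x$.

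It remains to identify the $p+1$ first-arch applications as $\big(b,\,b-\pi(\ol{\rev}(u)),\,1\big)$. The boundary flips are the first (position $b$) and the last (position $1$, the $h$-flip of application $p+1$, which turns the opening step around); the heart of the matter is the $2p$ \emph{interior} flips at positions $2,\dots,b-1$. Here I would prove a \emph{reflected decoupling}: the image under $\ol{\rev}$ of the arch's interior evolves exactly like the standalone journey of $\ol{\rev}(u)$. Under $\ol{\rev}$ the complement swaps $\upstep\leftrightarrow\downstep$ and hence interchanges the roles of $g$ and $h$, while the reversal reflects positions through the centre of the arch; this is the source of both the reflection $\ol{\rev}(u)$ and the mirroring $c\mapsto b-c$ in the statement. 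Granting it, the interior flips are $b-\pi(\ol{\rev}(u))$, and since $|\ol{\rev}(u)|=2p<2k$ (and $|v|=2q<2k$) the induction closes.

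The main obstacle is exactly this reflected decoupling, and it is more delicate than the forward one because the interior of the arch lives \emph{above} the lines $y=0,y=1$ that drive $g$ and $h$, so it is processed only as it is successively pulled down through the opening of the arch. Two bookkeeping ingredients are the technical crux. First, the interior flips straddle the factorisation $f=h\circ g$: the interior is touched by the $h$ of one application and the $g$ of the next, so each standalone step $\tilde f=\tilde h\circ\tilde g$ on $\ol{\rev}(u)$ must be matched with an $h$-then-$g$ pair across two consecutive applications of $f$ to $x$. Second, the interior sub-path is \emph{unbalanced} while it is being transformed, so identifying the $\downstep$-steps of $x$ touching $y=0$ and the $\upstep$-steps touching $y=1$ with those of $\ol{\rev}(u)$ under $\ol{\rev}$ requires tracking a vertical shift equal to the current net displacement of the interior. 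I would settle this by an inner induction on the number of applications, maintaining the invariant that $f^{j}(x)$ is obtained from the $\ol{\rev}$-image of the $j$-th state of the $\ol{\rev}(u)$-journey by re-inserting the two boundary steps, and checking at each half-step that the counts $d_0$ and $u_1$ on $x$ select precisely the step prescribed by the corresponding standalone map on $\ol{\rev}(u)$.
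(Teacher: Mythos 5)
Your overall architecture matches the paper's: the first flip at position $|u|+2$, the interior of the first arch flipped in the order $|u|+2-\pi(\ol{\rev}(u))$, then position $1$, then $v$ processed as $|u|+2+\pi(v)$; and your forward and tail decouplings (that $g$ and $h$ stay inside the arch while $v$ is passive, and vice versa afterwards) are essentially the facts the paper establishes via Lemmas~\ref{lem:recover-x0}, \ref{lem:01-positions} and \ref{lem:10-positions}. The problem is that the one step you correctly single out as ``the heart of the matter'' --- the reflected decoupling for the interior of the arch --- is never proved; it is only announced (``I would prove'', ``I would settle this by''). This is not a routine verification: the interior $u$ sits at height $1$ and is only gradually pulled down through the lines $y=0$ and $y=1$ that drive $g$ and $h$; the correspondence with the standalone journey of $\ol{\rev}(u)$ is offset by half an application of $f$ (your own observation); and the interior subpath is unbalanced while it is being transformed, so the counts $d_0$ and $u_1$ on $f^j(x)$ do not literally translate into the corresponding counts for the $\ol{\rev}(u)$-journey. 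Your proposed invariant, as stated, equates $f^{j}(x)$ with the $\ol{\rev}$-image of the $j$-th state of that journey, which is already inconsistent with the half-step shift you acknowledge two sentences earlier; repairing it requires an intermediate-state formulation and a case analysis at each half-step that you do not supply. Until that is carried out, the middle block $|u|+2-\pi(\ol{\rev}(u))$ is unjustified and the induction does not close. (Note also that the induction hypothesis only gives you the \emph{formula} for $\pi(\ol{\rev}(u))$; it does not by itself give any simulation correspondence between the arch's interior and the standalone journey, so the reflected decoupling must be proved from scratch.)

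The paper resolves exactly this difficulty by a different device: it strengthens the statement to Lemma~\ref{lem:altperm-rec+}, which describes $\pi(\xhat)|_x$ for an arbitrary Dyck subpath $x$ of $\xhat$ starting at height $c$, with two clauses --- the stated recursion when $c$ is even, and $\pi(\xhat)|_x=|x|+1-\pi(\ol{\rev}(x))$ when $c$ is odd. With this stronger induction hypothesis the reflected order for $u$ (a subpath at height $1$) comes for free from the odd clause, and both clauses are proved by the same structural decomposition, using Lemma~\ref{lem:recover-x0} to pin down which step each application of $f$ flips, rather than by a step-by-step simulation. If you want to keep your route you must actually prove the half-step-shifted conjugacy between the arch's interior and the $\ol{\rev}(u)$-journey; otherwise I recommend adopting the paper's approach of generalizing the statement before inducting.
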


Lemma~\ref{lem:altperm-rec} is illustrated in Figure~\ref{fig:altperm-rec}.
Intuitively, for any Dyck path $x\in D_{2k}^0$ the sequence $\pi(x)$ can be computed by considering all the `hills' of $x$ that are located at an even height from left to right and those located at an odd height from right to left (in each recursion step, the direction changes), and by flipping the steps in each `hill' in reverse order (first the last step, then the part in the middle recursively, then the first step).

\begin{figure}
\centering
\PSforPDF{
 \psfrag{x0}{$x\in D_{2k}^0$}
 \psfrag{pix0}{$\pi(x)=(2,1,6,4,5,3,26,12,20,16,18,17,19,14,15,13,24,22,23,21,25,8,10,9,11,7)$}
 \psfrag{n01}{\footnotesize $1$}
 \psfrag{n02}{\footnotesize $2$}
 \psfrag{n03}{\footnotesize $3$}
 \psfrag{n04}{\footnotesize $4$}
 \psfrag{n05}{\footnotesize $5$}
 \psfrag{n06}{\footnotesize $6$}
 \psfrag{n07}{\footnotesize $7$}
 \psfrag{n08}{\footnotesize $8$}
 \psfrag{n09}{\footnotesize $9$}
 \psfrag{n10}{\footnotesize $10$}
 \psfrag{na1}{\footnotesize $11$}
 \psfrag{n12}{\footnotesize $12$}
 \psfrag{n13}{\footnotesize $13$}
 \psfrag{n14}{\footnotesize $14$}
 \psfrag{n15}{\footnotesize $15$}
 \psfrag{n16}{\footnotesize $16$}
 \psfrag{n17}{\footnotesize $17$}
 \psfrag{n18}{\footnotesize $18$}
 \psfrag{n19}{\footnotesize $19$}
 \psfrag{n20}{\footnotesize $20$}
 \psfrag{n21}{\footnotesize $21$}
 \psfrag{n22}{\footnotesize $22$}
 \psfrag{n23}{\footnotesize $23$}
 \psfrag{n24}{\footnotesize $24$}
 \psfrag{n25}{\footnotesize $25$}
 \psfrag{n26}{\footnotesize $26$}
 \includegraphics{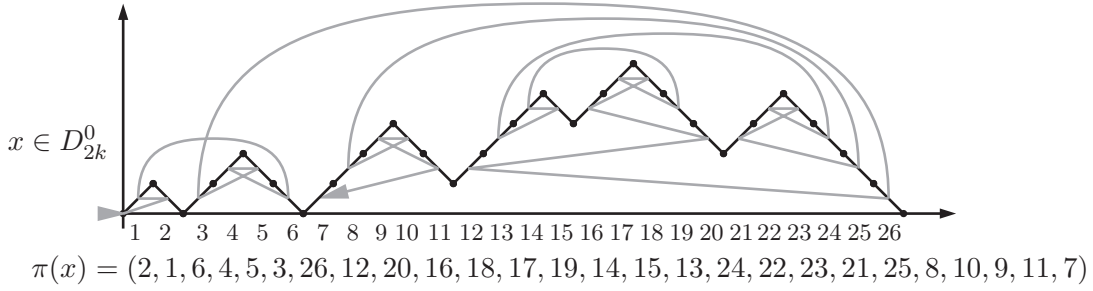}
}
\caption{Illustration of Lemma~\ref{lem:altperm-rec}. The gray line shows the order in which steps of the Dyck path $x\in D_{2k}^0$ are flipped.}
\label{fig:altperm-rec}
\end{figure}

Lemma~\ref{lem:altperm-rec} is an immediate consequence of the following more general, but slightly more technical lemma.
To state the lemma we introduce some more definitions.

Given a Dyck path $\xhat\in D_{2k}^e$, we refer to a subpath $x$ of $\xhat$ with $2\ell$ steps that starts and ends on some line $y=c$ and that has no $\downstep$-steps below the line $y=c$ as a \emph{Dyck subpath of $\xhat$}.
We use this wording to refer to a subpath of a surrounding larger Dyck path where the subpath is \emph{not} shifted to the origin.
Of course, shifting the subpath $x$ to the origin, $x$ can also be viewed as Dyck path from $D_{2\ell}^0$.
This interpretation is adopted whenever functions such as $\ol{\rev}$ or $\pi$ are applied to $x$.
Moreover, let $x$ be a Dyck subpath of $\xhat$ at positions $a+1,a+2,\ldots,a+2\ell$, and let $\sigma$ be the subsequence of $\pi(\xhat)$ consisting of exactly the elements from $\{a+1,a+2,\ldots,a+2\ell\}$.
Then we define $\pi(\xhat)|_x:=\sigma-a$ (by Lemma~\ref{lem:altperm} the sequence $\pi(\xhat)|_x$ is a permutation on the set $\{1,2,\ldots,2\ell\}$).

\begin{lemma}
\label{lem:altperm-rec+}
Let $\xhat\in D_{2k}^0$ and $c\in\mathbb{N}$, and let $x$ be a Dyck subpath of $\xhat$ at positions $a+1,a+2,\ldots,a+2\ell$ that starts and ends on the line $y=c$.
Then the elements from $\{a+1,a+2,\ldots,a+2\ell\}$ appear consecutively in $\pi(\xhat)$.
Moreover, if $c$ is even we have $\pi(\xhat)|_x=\pi(x)=\big(|u|+2,|u|+2-\pi(\ol{\rev}(u)),1,|u|+2+\pi(v)\big)$, where $x=\upstep\circ u\circ \downstep\circ v$ is the canonical decomposition of $x\in D_{2\ell}^0$.
On the other hand, if $c$ is odd we have $\pi(\xhat)|_x=|x|+1-\pi(\ol{\rev}(x))$.
\end{lemma}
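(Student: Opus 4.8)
The plan is to prove Lemma~\ref{lem:altperm-rec+} by strong induction on the half-length $\ell$ of the Dyck subpath $x$, establishing the assertions for even and for odd $c$ simultaneously, since the recursion for one parity feeds on the other. For $\ell=0$ the subpath is empty and all three claims hold vacuously. For the inductive step I write the canonical decomposition $x=\upstep\circ u\circ\downstep\circ v$, now read off at height $c$: the first step is an $\upstep$-step from $c$ to $c+1$ at position $a+1$, then $u$ is a Dyck subpath of $\xhat$ at height $c+1$ occupying positions $a+2,\ldots,a+|u|+1$, then a $\downstep$-step from $c+1$ back to $c$ at position $a+|u|+2$, and finally $v$ is a Dyck subpath of $\xhat$ at height $c$ occupying positions $a+|u|+3,\ldots,a+2\ell$. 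Both $u$ and $v$ have half-length strictly less than $\ell$, and $u$ sits at the parity opposite to $c$ while $v$ sits at the same parity as $c$, so the induction hypothesis applies to each of them.

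Next I would record what the induction hypothesis gives. By hypothesis the positions of $u$ form a contiguous block of $\pi(\xhat)$ which, since $u$ lies at height $c+1$, equals $|u|+1-\pi(\ol{\rev}(u))$ in the local coordinates of $u$ when $c$ is even (the odd case for $u$) and equals $\pi(u)$ when $c$ is odd (the even case for $u$); symmetrically for $v$ at height $c$. Translating these blocks into the coordinates of $x$ (shifting the $u$-block by $1$ and the $v$-block by $|u|+2$) reproduces exactly the two inner blocks $|u|+2-\pi(\ol{\rev}(u))$ and $|u|+2+\pi(v)$ appearing in the even-$c$ target formula; the odd-$c$ target is matched the same way after the reflection $|x|+1-(\cdot)$. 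Thus the only thing left is to control (i) that the positions of $x$ as a whole are consecutive in $\pi(\xhat)$ and (ii) the relative order of the four pieces, namely that for even $c$ the global process flips the closing $\downstep$-step at position $a+|u|+2$ first, then resolves the entire $u$-block, then the opening $\upstep$-step at position $a+1$, and finally the $v$-block (with the mirrored order for odd $c$).

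The heart of the argument, and the step I expect to be the main obstacle, is this dynamical ordering and consecutiveness claim. The key sub-claim is that once the global sequence of applications of $f$ first flips a position inside $x$, it keeps flipping positions inside $x$ until all $2\ell$ of them are flipped, never interleaving with positions outside $x$. I would prove this using the minimum-change adjacency encoded in Lemmas~\ref{lem:01-positions} and~\ref{lem:10-positions}: each application of $f$ flips a $\downstep$-step at $\pos(g,\cdot)$ and an $\upstep$-step at the strictly smaller $\pos(h,\cdot)$ with no $\upstep$-step touching $y=1$ strictly between them, while successive applications are linked by the absence of any $\downstep$-step touching $y=0$ strictly between the $h$-flip of one step and the $g$-flip of the next. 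Combined with Lemma~\ref{lem:altperm} (each position is flipped exactly once and $\pi(\xhat)$ is alternating), this forces the process, upon entering the hill, to descend into it and exhaust it before it can escape. To pin down where it enters and in which order it proceeds I would use Lemma~\ref{lem:recover-x0} to identify, at each stage, which positions have already been flipped, and hence the effective height at which the subpath currently sits: by the time the process reaches $x$ the subpath has been shifted down to its lowest relevant position, and the parity of $c$ governs whether the $g$-rule or the $h$-rule acts on it first, which is precisely the mechanism behind the parity-dependent mirroring.

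Granting the ordering claim, I would assemble the pieces for even $c$: the first flip inside $x$ is the closing $\downstep$-step (local position $|u|+2$), accounting for the leading entry of the formula; the ensuing contiguous run resolves $u$ in the mirrored order supplied by the odd-case induction hypothesis; then the opening $\upstep$-step at local position $1$ is flipped; and the remaining run resolves $v$ in the order supplied by the even-case induction hypothesis. This yields $\pi(\xhat)|_x=\pi(x)=\big(|u|+2,\,|u|+2-\pi(\ol{\rev}(u)),\,1,\,|u|+2+\pi(v)\big)$. For odd $c$ I would run the completely analogous dynamical argument with the roles of $\upstep$/$\downstep$ and of the lines $y=0$/$y=1$ interchanged; this reverses both the spatial order of the flips and the orientation of the path, so that matching the resulting order to the closed form $|x|+1-\pi(\ol{\rev}(x))$ is a direct (if slightly tedious) verification using the definition of $\ol{\rev}$ and of the reflection $|x|+1-(\cdot)$. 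Finally, Lemma~\ref{lem:altperm-rec} is the special case $\xhat=x$, $c=0$.
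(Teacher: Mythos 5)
Your proposal follows essentially the same route as the paper's proof: induction on $\ell$, the canonical decomposition with $u$ at height $c+1$ (opposite parity) and $v$ at height $c$ (same parity), the observation that the subpath has been shifted down to height $0$ (resp.\ $1$) by the time the process first enters it, and the flip order closing-$\downstep$, $u$-block (mirrored), opening-$\upstep$, $v$-block, all justified via Lemmas~\ref{lem:01-positions}, \ref{lem:10-positions}, \ref{lem:altperm} and \ref{lem:recover-x0}. The steps you leave as "the main obstacle" (that the subpath sits at the right height, that the first flip inside $x$ is the $\downstep$-step at local position $|u|+2$, and the non-interleaving) are exactly the ones the paper verifies in detail using Lemma~\ref{lem:recover-x0}, so your plan fills in correctly.
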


\begin{proof}
For the reader's convenience, the notations used in this proof are illustrated in Figure~\ref{fig:subpath}.

Let $\xhat=(\xhat_1,\xhat_2,\ldots,\xhat_{2k})\in D_{2k}^0$ and $x=(\xhat_{a+1},\xhat_{a+2},\ldots,\xhat_{a+2\ell})$ be as in the lemma.
We prove the lemma by induction on $\ell=|x|/2$.
The statement clearly holds if $\ell=0$, settling the induction basis.
For the induction step we assume that $\ell\geq 1$ and that the statement holds for all Dyck subpaths of $\xhat$ of length strictly less than $2\ell$, and prove that is also holds for $x$.

We first consider the case that $c$ is even.
For the canonical decomposition $x=\upstep\circ u\circ \downstep\circ v$ of $x\in D_{2\ell}^0$, we consider the Dyck subpaths $u$ and $v$ of $x$ in $\xhat$ (see Figure~\ref{fig:subpath}~(a)).
 
In the following we examine the sequence of Dyck paths $\xhat,f(\xhat),\ldots,f^k(\xhat)$ and in which order the steps of the subpath $x$ are flipped when repeatedly applying $f$ (recall from Lemma~\ref{lem:altperm} that each step of $\xhat$ and $x$ is flipped exactly once).
Let $r$ be the largest integer such that the positions flipped during the first $2r$ applications of $f$ are not in the set $\{a+1,a+2,\ldots,a+2\ell\}$ (equivalently, the first $2r$ entries of $\pi(\xhat)$ are not from this set). By definition, in $f^r(\xhat)$ none of the steps at positions $a+1,a+2,\ldots,a+2\ell$ are flipped yet (w.r.t.\ $\xhat$), so the subpath $x'$ of $f^r(\xhat)$ at these positions is simply a shifted-down copy of $x$ (in particular, $x'$ is a Dyck subpath of $f^r(\xhat)$). Let $u'$ and $v'$, respectively, be the corresponding shifted-down copies of $u$ and $v$ (see Figure~\ref{fig:subpath}~(b)).

We claim that the Dyck subpath $x'$ of $f^r(\xhat)$ starts and ends on the line $y=0$ (see Figure~\ref{fig:subpath}~(b)).
To see this note that $x$ must be completely contained either in the initial, middle, or terminal part of each of the Dyck paths $\xhat,f(\xhat),\ldots,f^{r-1}(\xhat)$, and that this subpath is shifted down by two units if it is contained the middle part and not shifted otherwise.
As $c$ is even, we obtain that in each of the Dyck paths $\xhat,f(\xhat),\ldots,f^r(\xhat)$ the subpath $x$ starts and ends on a horizontal line at an even height.
Let $c'$ be this even height with respect to $f^r(\xhat)$.
Note that $c'$ cannot be negative as otherwise the first step of $x'$ (an $\upstep$-step) is an unflipped $\upstep$-step below the line $y=0$ in $f^r(\xhat)$, contradicting Lemma~\ref{lem:recover-x0}.
Moreover, as $f$ does not flip any steps above the line $y=2$ we conclude that $c'=0$, as claimed.

\begin{figure}
\centering
\PSforPDF{
 \psfrag{lab1}{(a)}
 \psfrag{lab2}{(b)}
 \psfrag{lab3}{(c)}
 \psfrag{lab4}{(d)}
 \psfrag{lab5}{(e)}
 \psfrag{xhat}{$\xhat$}
 \psfrag{x}{$x$}
 \psfrag{xp}{$x'$}
 \psfrag{u}{$u$}
 \psfrag{v}{$v$}
 \psfrag{up}{$u'$}
 \psfrag{vp}{$v'$}
 \psfrag{olu}{$\ol{u'}$}
 \psfrag{w}{$w$}
 \psfrag{f}{$f$}
 \psfrag{h}{$h$}
 \psfrag{ts}{$2s$}
 \psfrag{xpup2}{$x'_{|u|+2}$}
 \psfrag{xp1}{$x'_1$}
 \psfrag{fr}{$f^r(\xhat)$}
 \psfrag{frp1}{$f^{r+1}(\xhat)$}
 \psfrag{gfrs}{$g(f^{r+s}(\xhat))$}
 \psfrag{frsp1}{$f^{r+s+1}(\xhat)$}
 \includegraphics{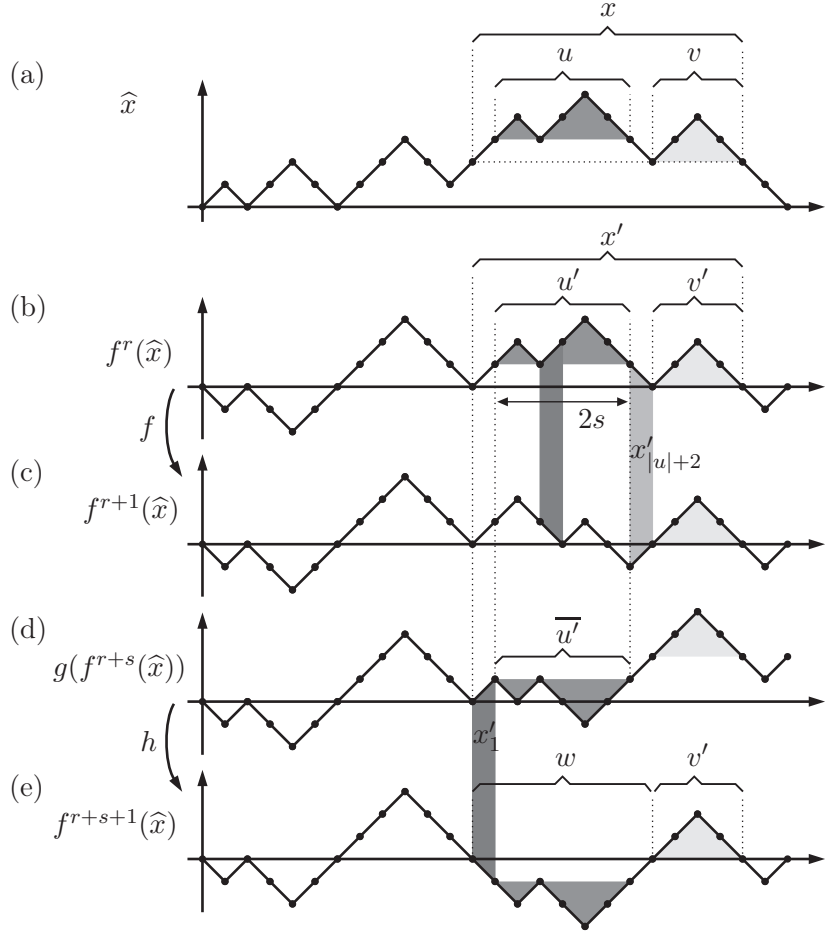}
}
\caption{Illustration of the notations used in the proof of Lemma~\ref{lem:altperm-rec+}.}
\label{fig:subpath}
\end{figure}

Next we show that applying $f$ to $f^r(\xhat)$ flips a $\downstep$-step of $x'$.
For the sake of contradiction assume that an $\upstep$-step of $x'$ and a $\downstep$-step strictly to the right of position $a+2\ell$ are flipped.
Then in $f^{r+1}(\xhat)$ the $\downstep$-step at position $a+2\ell$ (obtained by shifting down the last step of $x'$ by two units) is unflipped (w.r.t.\ $\xhat$) and lies below the line $y=-1$, contradicting Lemma~\ref{lem:recover-x0}.
Therefore, applying $f$ to $f^r(\xhat)$ indeed flips a $\downstep$-step of $x'$.

We proceed to show that when applying $f$ to $f^r(\xhat)$ the $\downstep$-step of $x'=(x'_1,x'_2,\ldots,x'_{2\ell})$ being flipped is exactly the step $x'_{|u|+2}$ (see Figure~\ref{fig:subpath}~(b) and (c)).
Clearly, the $\downstep$-step being flipped touches the line $y=0$ and the step $x'_{|u|+2}$ is the leftmost one in $x'$ with that property.
Suppose for the sake of contradiction that the $\downstep$-step $x'_{|u|+2}$ is not flipped, but instead a $\downstep$-step of $x'$ further to the right, then by the definition of $f$ the step $x'_{|u|+2}$ would be among the first $d_0(f^r(\xhat))$ many $\downstep$-steps of $f^r(\xhat)$ touching the line $y=0$.
This however contradicts Lemma~\ref{lem:recover-x0}, from which it would follow that $x'_{|u|+2}$ is flipped in $f^r(\xhat)$ (w.r.t.\ $\xhat$).

Consequently, applying $f$ to $f^r(\xhat)$ flips the $\downstep$-step $x'_{|u|+2}$ and by Lemma~\ref{lem:01-positions} the first $\upstep$-step of $f^r(\xhat)$ touching the line $y=1$ to the left of it.
As the $\upstep$-step $x'_1$ touches the line $y=1$, the $\upstep$-step being flipped is part of $u'$ if $u'$ is non-empty, and equal to $x'_1$ otherwise.

Note that $u$ is a Dyck subpath of $x$ that starts and ends on the line $y=c+1$ and has length strictly less than $2\ell$.
We therefore obtain by induction that $\pi(\xhat)|_u=|u|+1-\pi(\ol{\rev}(u))$. In particular, while no steps of $u'$ are flipped yet in $g(f^r(\xhat))$ (w.r.t.\ $\xhat$), all steps of $u'$ are flipped in $g(f^{r+s}(x))$, where $s:=|u|/2$ (see Figure~\ref{fig:subpath}~(d)).

Now it is easy to see that $x'_1$ is flipped when applying $h$ to $g(f^{r+s}(\xhat))$ (see Figure~\ref{fig:subpath}~(d) and (e)).
Indeed, in $g(f^{r+s}(x))$ the $\upstep$-step $x'_1$ is the first one touching the line $y=1$ to the left of $\ol{u'}$, so by Lemma~\ref{lem:01-positions} this step is flipped by $h$.

Let us now consider the Dyck path $h(g(f^{r+s}(x)))=f^{r+s+1}(x)$ (see Figure~\ref{fig:subpath}~(e)), and let $w$ be the subpath at positions $a+1,a+2,\ldots,a+|u|+2$ (we have $w=\downstep\circ \ol{u'}\circ \upstep$).
By Lemma~\ref{lem:10-positions}, applying $f$ to $f^{r+s+1}(x)$ flips the first $\downstep$-step touching the line $y=0$ to the right of $w$, which is clearly a $\downstep$-step of $v'$ if $v'$ is non-empty. As the length of $v'$ is strictly less than $2\ell$, we obtain by induction that $\pi(\xhat)|_v=\pi(v)$. In particular, when repeatedly applying $f$ to $f^{r+s+1}(x)$, first all steps of $v'$ are flipped, and then all other unflipped (w.r.t.\ $\xhat$) steps of $f^{r+s+1}(x)$.

Combining our previous observations we obtain $\pi(\xhat)|_x=\big(|u|+2,|u|+2-\pi(\ol{\rev}(u)),1,|u|+2+\pi(v)\big)$. Repeating the above argument with the Dyck path $\xhat:=x$ shows that this expression also equals $\pi(x)$, as claimed.

We now consider the case that $c$ is odd.
The analysis in this case is very similar to the case where $c$ is even, so we omit most of the details.
For the canonical decomposition $\ol{\rev}(x)=\upstep\circ u\circ \downstep\circ v$ of $\ol{\rev}(x)\in D_{2\ell}^0$, i.e., we have $x=\ol{\rev}(v)\circ \upstep\circ \ol{\rev}(u)\circ \downstep$, we consider the Dyck subpaths $\ol{\rev}(v)$ and $\ol{\rev}(u)$ of $x$ in $\xhat$.
We let $r$ be the maximal integer such that the positions flipped during the first $2r$ applications of $f$ to $\xhat$ are not in the set $\{a+1,a+2,\ldots,a+2\ell\}$, and let $x'$ be the subpath of $f^r(\xhat)$ at these positions ($x'$ is a shifted-down copy of $x$).
Arguing along similar lines as in the previous case, one can show that $x'=(x'_1,x'_2,\ldots,x'_{2\ell})$ starts and ends on the line $y=1$ in $f^r(x)$, and that the $\upstep$-step $x'_{|v|+1}$ is flipped when applying $f$ to $f^r(\xhat)$.
Moreover, after flipping $x'_{|v|+1}$, all the steps of the shifted-down copy of $\ol{\rev}(u)$ are flipped in the order $\pi(\ol{\rev}(u))$ (by induction), followed by the $\downstep$-step $x'_{|u|+|v|+2}=x'_{2\ell}$, followed by all the steps of the shifted-down copy of $\ol{\rev}(v)$ in the order $|v|+1-\pi(v)$ (by induction).
Combining these observations we obtain
\begin{align*}
  \pi(\xhat)|_x &= \big(|v|+1,|v|+1+\pi(\ol{\rev}(u)),|u|+|v|+2,|v|+1-\pi(v)\big) \\
                &= |x|+1-\big(|u|+2,|u|+2|-\pi(\ol{\rev}(u)),1,|u|+2+\pi(v)\big) \\
                &= |x|+1-\pi(\ol{\rev}(x)) \enspace,
\end{align*}
where in the second to last step we used $|x|=|u|+|v|+2$, and in the last step we used the first part of the lemma.
This completes the proof.
\end{proof}

\subsection{Proofs of Theorems~\texorpdfstring{\ref{thm:min-change-bij}--\ref{thm:middle-2f}}{2--5}}

With the auxiliary lemmas from the previous section in hand, we are now ready to present the proofs of our main results.

\begin{proof}[Proof of Theorem~\ref{thm:min-change-bij}]
Combining Lemmas~\ref{lem:g-inverse}, \ref{lem:h-inverse} and \ref{lem:flaws++} we obtain that for any $e\in\{0,1,\ldots,k-1\}$ the mapping $f$ is an injection between $D_{2k}^e$ and $D_{2k}^{e+1}$.
This implies $|D_{2k}^0|\leq |D_{2k}^1|\leq \cdots\leq |D_{2k}^k|$, and as $D_{2k}^0$ and $D_{2k}^k$ have the same size by symmetry, all these inequalities are in fact equalities.
In particular, $f$ is a bijection between $D_{2k}^e$ and $D_{2k}^{e+1}$.
\end{proof}

\begin{proof}[Proof of Theorem~\ref{thm:algo}]
We begin by describing an algorithm that computes for a given Dyck path $x\in D_{2k}^e$ the sequence of Dyck paths $f(x),f^2(x),\ldots,f^{k-e}(x)$, and then argue about the running time and space requirements of the algorithm.

In the initialization phase, for a given Dyck path $x\in D_{2k}^e$, our algorithm first computes the corresponding Dyck path $x^0\in D_{2k}^0$ such that $f^e(x^0)=x$ with the help of Lemma~\ref{lem:recover-x0} (see Figure~\ref{fig:algo}). We then compute the sequence $\pi(x^0)$ using the recursive rule described in Lemma~\ref{lem:altperm-rec}. With the help of $\pi(x^0)$ we can then compute each of the Dyck paths $f^i(x)$, $i=1,2,\ldots,k-e$, by flipping in the $i$-iteration the two steps in $x$ at the positions that are given by the entries of $\pi(x^0)$ at positions $2(e+i)-1$ and $2(e+i)$ (a $\downstep$-step and an $\upstep$-step, respectively).

\begin{figure}
\centering
\PSforPDF{
 \psfrag{f}{$f^e(x^0)$}
 \psfrag{x}{$x\in D_{2k}^e$}
 \psfrag{x0}{$x^0\in D_{2k}^0$}
 \psfrag{dd}{$D_x=\{1,3,4,12,13,14,16,17\}$}
 \psfrag{uu}{$U_x=\{2,5,6,15,18,19,20,26\}$}
 \psfrag{ucupd}{$U_x\cup D_x$}
 \psfrag{d0x}{$d_0(x)=4$}
 \psfrag{pix0}{$\pi(x^0)=(2,1,6,4,5,3,26,12,20,16,18,17,19,14,15,13,24,22,23,21,25,8,10,9,11,7)$}
 \includegraphics{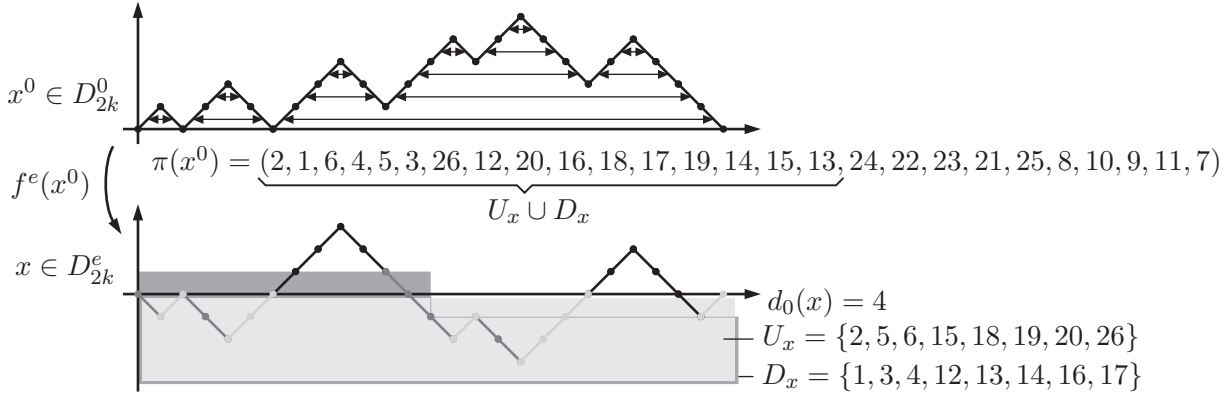}
}
\caption{Illustration of Lemma~\ref{lem:recover-x0} and of the data structures used in our algorithm from Theorem~\ref{thm:algo}. Note that $x^0$ and $x$ differ exactly at the positions in $U_x\cup D_x$. The horizontal bidirectional arrows in the upper part of the figure represent the pointer array used for fast navigation along the Dyck path $x^0$ when computing $\pi(x^0)$ recursively.}
\label{fig:algo}
\end{figure}

Computing $x^0$ can clearly be achieved in time $\cO(k)$, and with $\pi(x^0)$ at hand, each of the Dyck paths $f(x),f^2(x),\ldots,f^{k-e}(x)$ can easily be computed in time $\cO(1)$. It remains to show how to compute $\pi(x^0)$ recursively in time $\cO(k)$ (time spent in the initialization phase). For this we precompute an array of bidirectional pointers below the `hills' of $x^0$ between neighboring pairs of an $\upstep$-step and a $\downstep$-step on the same height level (see the upper part of Figure~\ref{fig:algo}).
With the help of these pointers, the canonical decomposition of $x^0$ into subpaths $u$ and $v$ required in Lemma~\ref{lem:altperm-rec} can be performed in constant time. Moreover, throughout the recursive computation of $\pi(x^0)$ we maintain the two boundary indices of the subpath currently under consideration and whether this subpath is considered in forward or backward direction (the direction alternates in each recursion step), so that the flip positions in the recursion formula can be computed directly from these boundary indices in constant time (recall the remarks after Lemma~\ref{lem:altperm-rec} and see the comments of our C++ implementation for details \cite{www}).
It follows that the overall running time for the recursion is $\cO(k)$, as desired.

The space required by all the above-mentioned data structures is clearly $\cO(k)$.
This completes the proof.
\end{proof}

\begin{proof}[Proof of Theorem~\ref{thm:odd-2f}]
For this proof we interpret the lattice paths in $L_{2k,k}$ and $L_{2k,k+1}$ as subsets of the ground set $[2k]:=\{1,2,\ldots,2k\}$, where an $\upstep$-step at position $i$ means that the element $i$ is contained in the subset, and a $\downstep$-step at position $i$ means that the element $i$ is not contained in the subset.
For any $x\in D_{2k}^0$ we consider the sequence $p(x)=(x^0,y^0,x^1,y^1,\ldots,x^{k-1},y^{k-1},x^k)$ defined before Lemma~\ref{lem:altperm}. Viewing all the $x^i$ as $k$-element subsets of $[2k]$, and the $y^i$ as $(k+1)$-element subsets of $[2k]$, by the definition of the sequence $p(x)$ we have $y^i\supset x^i$ and $y^i\supset x^{i+1}$ for all $i=0,1,\ldots,k-1$.
We now define $\ol{y^i}:=([2k]\setminus y^i)\cup\{2k+1\}$ and consider the modified sequence $c(x):=(x^0,\ol{y^0},x^1,\ol{y^1},\ldots,x^{k-1},\ol{y^{k-1}},x^k)$. The sets $x^i$ and $\ol{y^i}$ are $k$-element subsets of $[2k+1]$, and we have $\ol{y^i}\cap x^i=\emptyset$ and $\ol{y^i}\cap x^{i+1}=\emptyset$ for all $i=0,1,\ldots,k-1$.
Moreover, by Lemma~\ref{lem:altperm} we have $x^k=[2k]\setminus x^0$, so $x^0\cap x^k=\emptyset$.
We can thus interpret $c(x)$ as a cycle of length $2k+1$ in the odd graph $O_{2k+1}$ (see Figure~\ref{fig:odd-2f}). 
By Lemma~\ref{lem:g-inverse} and Lemma~\ref{lem:h-inverse}, the cycles $c(x)$, $x\in D_{2k}^0$, are all disjoint, and they must consequently form a $\cC_{2k+1}$-factor of this graph that consists of $|D_{2k}^0|=C_k$ many cycles (the total number of vertices visited by these cycles is $(2k+1)C_k$, which is equal to $\binom{2k+1}{k}$, the total number of vertices of $O_{2k+1}$).
\end{proof}

\begin{figure}
\centering
\PSforPDF{
 \psfrag{first}{$D_{2k}^0$}
 \psfrag{last}{$D_{2k}^k$}
 \psfrag{o2kp1}{\Large $O_{2k+1}$}
 \psfrag{o2v0}{$\{x\in V(O_{2k+1})\mid 2k+1\notin x\}$}
 \psfrag{o2v1}{$\{x\in V(O_{2k+1})\mid 2k+1\in x\}$}
 \psfrag{x}{$x$}
 \psfrag{xp}{$[2k]\setminus x$}
 \psfrag{cx}{$c(x)$}
 \includegraphics{graphics/odd2f.eps}
}
\caption{Illustration of the notations used in the proof of Theorem~\ref{thm:odd-2f}.}
\label{fig:odd-2f}
\end{figure}

The following simple lemma allows us to transfer cycle-factors with cycles of odd length from the odd graph to the middle levels graph, and will be used to prove Theorem~\ref{thm:middle-2f}.

\begin{lemma}
\label{lem:transfer-2f}
For any $k\geq 1$ and any odd $\ell\geq 3$, if the odd graph $O_{2k+1}$ has a $\cC_\ell$-factor, then the middle levels graph $M_{2k+1}$ has a $\cC_{2\ell}$-factor.
\end{lemma}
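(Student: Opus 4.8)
The plan is to realize $M_{2k+1}$ as the bipartite double cover of $O_{2k+1}$ and then lift the given factor cycle by cycle. Concretely, I would use complementation to identify the two levels of $M_{2k+1}$ with the two sheets of the double cover: a $k$-set $A$ (a vertex of the lower level of $M_{2k+1}$) is kept as $A$, while the "upper-sheet copy" of $A$ is recorded by the $(k+1)$-set $\overline{A}:=[2k+1]\setminus A$ (a vertex of the upper level). The first step is to verify the relevant adjacency equivalence: for $k$-sets $A,B$ one has $A\cap B=\emptyset$, i.e.\ $A\sim B$ in $O_{2k+1}$, if and only if $A\subseteq\overline{B}$, i.e.\ $A\sim\overline{B}$ in $M_{2k+1}$ (since $|A|=|\overline{B}|-1=k$). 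Thus every edge $AB$ of $O_{2k+1}$ gives rise to exactly the two edges $A\,\overline{B}$ and $B\,\overline{A}$ of $M_{2k+1}$.

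Next I would lift each cycle of the factor separately. Fix a cycle $C=(A_0,A_1,\ldots,A_{\ell-1})$ of the $\cC_\ell$-factor of $O_{2k+1}$ (indices read modulo $\ell$, with $A_i\cap A_{i+1}=\emptyset$), and define the closed walk
\begin{equation*}
  C':=\big(A_0,\overline{A_1},A_2,\overline{A_3},\ldots,A_{\ell-1},\overline{A_0},A_1,\overline{A_2},\ldots,\overline{A_{\ell-1}}\big)
\end{equation*}
that traverses $C$ twice while alternating between the lower level ($k$-sets) and the upper level ($(k+1)$-sets). By the adjacency criterion above every two consecutive entries form an edge of $M_{2k+1}$, so $C'$ is a cycle. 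The crucial point is that $C'$ has length exactly $2\ell$ and visits each of $A_0,\ldots,A_{\ell-1}$ and each of $\overline{A_0},\ldots,\overline{A_{\ell-1}}$ exactly once: this is precisely where the hypothesis that $\ell$ is \emph{odd} is used, since after one full pass around $C$ (that is, after $\ell$ steps) the parity of the level has flipped, so the walk arrives at $\overline{A_0}$ rather than at $A_0$ and does not close up prematurely. I expect this length/parity argument to be the only genuinely delicate step — if $\ell$ were even, the same walk would split into two disjoint $\ell$-cycles instead of one $2\ell$-cycle, and oddness is exactly what rules this out.

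Finally I would assemble the factor. Because the $\cC_\ell$-factor partitions all $k$-element subsets of $[2k+1]$, and each cycle $C$ of the factor produces a cycle $C'$ using exactly the $k$-sets lying on $C$ together with their complements, the cycles $C'$ are pairwise vertex-disjoint and jointly cover every $k$-set and every $(k+1)$-set of $M_{2k+1}$ exactly once. Hence $\{C'\}$ is a $\cC_{2\ell}$-factor of $M_{2k+1}$, completing the argument; apart from the parity step, everything reduces to a direct check of adjacencies and a counting verification of disjointness.
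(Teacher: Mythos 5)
Your proposal is correct and follows essentially the same route as the paper: both lift each $\ell$-cycle of the factor in $O_{2k+1}$ to a $2\ell$-cycle in $M_{2k+1}$ by traversing it twice while alternately complementing the vertices, with oddness of $\ell$ guaranteeing the walk does not close after one pass. Your phrasing via the bipartite double cover and your explicit parity argument only make more precise what the paper dismisses with ``it is easy to see.''
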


\begin{proof}
Let $m$ be such that $\ell=2m+1$, and for any cycle $c=:(x^1,x^2,\ldots,x^{2m+1})$ of the cycle-factor of the odd graph $O_{2k+1}$ consider the corresponding cycle
\[
 c':=(x^1,\ol{x^2},x^3,\ol{x^4},\ldots,x^{2m-1},\ol{x^{2m}},x^{2m+1},\ol{x^1},x^2,\ol{x^3},x^4,\ldots,\ol{x^{2m-1}},x^{2m},\ol{x^{2m+1}})
\]
in the middle levels graph $M_{2k+1}$, where $\ol{x^i}:=[2k+1]\setminus x^i$ for all $i=1,2,\ldots,\ell$. It is easy to see that each of these cycles has length $2\ell$, and that together they form a cycle-factor of $M_{2k+1}$ (the middle levels graph has twice as many vertices as the odd graph).
\end{proof}

\begin{proof}[Proof of Theorem~\ref{thm:middle-2f}]
Combine Theorem~\ref{thm:odd-2f} and Lemma~\ref{lem:transfer-2f}.
\end{proof}

\bibliographystyle{alpha}
\bibliography{refs}

\end{document}